\DeclareMathOperator{\Aut}{Aut}
\DeclareMathOperator{\Sym}{Sym}
\DeclareMathOperator{\Rep}{Rep}
\DeclareMathOperator{\PSL}{PSL}
\DeclareMathOperator{\PGL}{PGL}
\DeclareMathOperator{\PSU}{PSU}
\DeclareMathOperator{\PGaL}{P\Gamma L}
\DeclareMathOperator{\AGaL}{A\Gamma L}
\DeclareMathOperator{\AGL}{AGL}
\DeclareMathOperator{\soc}{soc}
\renewcommand{\b}{\mathbf}
\renewcommand{\leq}{\leqslant}
\renewcommand{\geq}{\geqslant}
\newcommand{\F}{\mathbb F}
\newcommand{\D}{\mathcal D}
\renewcommand{\O}{\mathcal O}
\renewcommand{\H}{\mathcal H}
\renewcommand{\P}{\mathcal P}
\newcommand{\E}{\mathcal E}
\DeclareMathOperator{\supp}{supp}
\DeclareMathOperator{\Diff}{Diff}
\DeclareMathOperator{\wt}{wt}
\theoremstyle{plain}
\newtheorem{lemma}{Lemma}
\newtheorem{corollary}[lemma]{Corollary}
\newtheorem{theorem}[lemma]{Theorem}
\newtheorem{proposition}[lemma]{Proposition}
\theoremstyle{definition}
\newtheorem{definition}[lemma]{Definition}
\newtheorem{remark}[lemma]{Remark}
\numberwithin{equation}{section}
\numberwithin{lemma}{section}
\begin{document}

\title{Entry-Faithful $2$-Neighbour Transitive Codes}
 \author{Neil I. Gillespie, Michael Giudici, Daniel R. Hawtin and \\Cheryl E. Praeger}
 
 \address{[Gillespie] Heilbronn Institute for Mathematical Research, School of Mathematics, Howard House, University of Bristol, BS8 1SN, United Kingdom.\newline
 \indent[Giudici, Hawtin, Praeger] Centre for the Mathematics of Symmetry and Computation, University of Western Australia, 35 Stirling Highway, Crawley, WA 6009, Australia.}

\email{neil.gillespie@bristol.ac.uk \\michael.giudici@uwa.edu.au\\ \newline daniel.hawtin@research.uwa.edu.au \\cheryl.praeger@uwa.edu.au}

\date{\today}

\thanks{
{\it 2000 Mathematics Subject Classification:} 05E20, 68R05, 20B25.\\
{\it Completely transitive codes, Regular codes, $2$-Neighbour transitive codes, Automorphisms groups, Hamming graph.}\\
The research for this paper was supported by a grant associated with Australian Research Council Federation Fellowship FF0776186. The third author is supported by an Australian Postgraduate Award and UWA Top-up Scholarship.}

\begin{abstract}
 We consider a code to be a subset of the vertex set of a Hamming graph. The set of $s$-neighbours of a code is the set of vertices, not in the code, at distance $s$ from some codeword, but not distance less than $s$ from any codeword. A $2$-neighbour transitive code is a code which admits a group $X$ of automorphisms which is transitive on the $s$-neighbours, for $s=1,2$, and transitive on the code itself. We give a classification of $2$-neighbour transitive codes, with minimum distance $\delta\geq 5$, for which $X$ acts faithfully on the set of entries of the Hamming graph.
\end{abstract}

\maketitle

\section{Introduction}\label{intro}

In the context of this paper, a \emph{code} is a subset of vertices of a Hamming graph $H(m,q)$, and $X$ is a subgroup of the full automorphism group of the Hamming graph. The set of $i$-neighbours, that is, the set of vertices at distance $i$ from the code, is denoted $C_i$. An $(X,s)$-neighbour transitive code $C$ is a code, such that $C, C_1, \ldots, C_s$ are all $X$-orbits. In this paper we begin work on classifying the family of $(X,2)$-neighbour transitive codes, considering here the case where $X$ acts faithfully on the entries of the Hamming graph (see Section~\ref{prelim}). We refer to such codes as \emph{entry-faithful} $(X,2)$-neighbour transitive codes.

Two extreme cases of $(X,s)$-neighbour transitive codes have been studied previously, when $s=1$ and when $s$ is equal to the covering radius. The covering radius is the largest value $s$ can take and gives the class of \emph{$X$-completely transitive codes}, (see \cite{Giudici1999647}). These are a generalisation of \emph{coset-completely transitive codes}, which were introduced by Sol{\'e} in 1987 \cite{sole1990completely}, and are a linear sub-family of completely transitive codes. In \cite{borges2001nonexistence}, Borges et al. classified binary coset-completely transitive codes with minimum distance at least $9$, showing that the \emph{binary repetition code} (see Definition \ref{repcode}) is the unique code in this family. Other work has been done by Borges et al.~in \cite{borges2013new,borges2000nonexistence,borgesrho1,borges2012new}. There exist completely transitive codes that are not coset-completely transitive.  For example, the first and fourth authors proved that certain Hadamard codes \cite{Gillespie20131394} and the Nordstrom-Robinson code \cite{gillespie2012nord} are completely transitive, but as they are non-linear, cannot be coset-completely transitive; also the repetition code of length $3$ over a finite field $\F_q$ for $q\geq 9$ is an example of a linear completely transitive code that is not coset-completely transitive.

The family of $(X,1)$-neighbour transitive codes corresponds to the \emph{$X$-neighbour transitive} codes, which have been studied by the first and fourth authors in \cite{ntrcodes,gillespiediadntc}, with a certain class characterised in \cite{gillespieCharNT}. 

Completely transitive codes form a sub-family of completely regular codes, introduced in 1973 by Delsarte \cite{delsarte1973algebraic}, and $(X,s)$-neighbour transitive codes are a sub-family of $s$-regular codes, for each $s$ (see Definition~\ref{regcode}). This means, in particular, that the set of minimum weight codewords of an $(X,s)$-neighbour transitive code forms a $q$-ary $s$-design (see Definition~\ref{desdef} and Lemma~\ref{design}), a fact which aids us in our classification. Completely regular codes have been studied extensively ever since Delsarte introduced them as a generalisation of perfect codes. In particular, Brouwer et al.~\cite{brouwer} and Neumaier \cite{Neumaier1992353} give results on these codes using the theory of distance regular graphs. Not only are these codes of interest to coding theorists as they possess a high degree of combinatorial symmetry, but, due to a result of Brouwer et al. \cite[p.353]{brouwer}, they are also the building blocks of certain types of distance regular graphs. Borges et al. have classified all linear completely regular codes that have \emph{covering radius $\rho=2$} and an \emph{antipodal} dual code \cite{borges2010q}, showing that these codes are extensions of linear completely regular codes with covering radius $\rho=1$.  They also classified this latter family of codes, and proved that these codes are in fact coset-completely transitive.

The main result of the paper is as follows. 

\begin{theorem}\label{main}
 Suppose $C$ is a code in $H(m,q)$, with $|C|\geq 2$ and $\delta\geq 5$. Then $C$ is entry-faithful $(X,2)$-neighbour transitive if and only if $C$ is equivalent to either:
 \begin{enumerate}
  \item a binary repetition code, and $X$ is isomorphic to a group listed in Table~\ref{repetitiongroups}, or
  \item the even weight subcode of the punctured Hadamard code of length 12 and $X\cong M_{11}$.
 \end{enumerate}
\end{theorem}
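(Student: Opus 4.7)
The plan is to leverage the action of $X$ on the entry set $M = \{1, \ldots, m\}$ of the Hamming graph. First, I would establish that this action is 2-transitive on $M$. Transitivity on $C_1$ already forces transitivity on $M$, since each 1-neighbour of a codeword $c$ differs from $c$ in exactly one entry. The hypothesis $\delta \geq 5$ ensures, for any $v \in C_2$, a unique nearest codeword $c$ differing from $v$ in exactly two entries; combining $X$-transitivity on $C$ and $C_2$ with this uniqueness, the unordered pair of entries in which $v$ and $c$ differ can be sent to any other such pair, yielding 2-homogeneity of $X$ on $M$, and hence 2-transitivity for $m \geq 5$.

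Next, I would apply the CFSG classification of finite 2-transitive permutation groups, which divides into the almost simple and affine cases. Throughout, I would exploit the fact (Lemma~\ref{design}) that the supports of the minimum weight codewords form a 2-design, since $(X,2)$-neighbour transitive codes are 2-regular. In the almost simple case, $X$ has simple socle $T$ acting 2-transitively on $M$, and for each candidate $T$ (alternating, $\PSL_d(q)$, other classical groups, Mathieu, and the remaining sporadic groups) I would analyse when a compatible code with $\delta \geq 5$ can exist, typically ruling out most cases by arguing either that the 2-design of minimum weight supports is too restrictive, or that entry-faithfulness collapses the alphabet, forcing the binary repetition structure. In the affine case, $X$ contains a regular translation subgroup $T \cong \F_p^d$ with $m = p^d$, and the point stabiliser $X_0$ acts transitively on $M \setminus \{0\}$; here the combination of faithfulness, the design condition, and the large minimum distance should again force $C$ to be a binary repetition code.

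Finally, I would verify directly that the two families listed in the theorem arise. The binary repetition code $C = \{\mathbf{0}, \mathbf{1}\}$ in $H(m,2)$ has $\delta = m$, and its full automorphism group modulo the alphabet action is $S_m$; any 2-transitive subgroup $X \leq S_m$ is entry-faithful and $(X,2)$-neighbour transitive, producing Table~\ref{repetitiongroups}. For the $M_{11}$-example, I would appeal to the standard construction of the Hadamard code of length 12, puncture and take the even weight subcode to obtain a code in $H(11,2)$, and use the sharply 4-transitive action of $M_{11}$ on the 11 entries to verify 2-neighbour transitivity. The main obstacle will be the systematic CFSG case analysis, particularly in the affine case where the 2-transitive action of $X_0$ on the non-zero vectors of $\F_p^d$ yields several subfamilies (classical stabilisers, $G_2(q)$-type actions, sporadic exceptions), each of which must be ruled out by a careful argument balancing the design property against the minimum distance constraint.
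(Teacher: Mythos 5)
Your outline captures the opening moves (two-transitivity of $X$ on $M$, reduction to the CFSG list of $2$-transitive groups, the design condition coming from $2$-regularity) but it omits the two structural tools that actually make the case analysis finite, and the ``if'' direction contains a concrete error. First, you never use the action on the alphabet: Proposition~\ref{x12trans} shows that $X_1^Q$ is $2$-transitive on $Q$, so $X$ carries \emph{two} intertwined $2$-transitive actions and hence satisfies the hypotheses of the classification of imprimitive rank~$3$ groups in \cite{devillers2011imprimitive}; this is what pins down the possible triples $(X,m,q)$ --- in particular it is the only mechanism in the paper that bounds $q$, which your sketch leaves completely unconstrained. (It is also needed for your very first step: $2$-homogeneity does \emph{not} imply $2$-transitivity for $m\geq 5$ --- e.g.\ $Z_7.Z_3$ is $2$-homogeneous but not $2$-transitive on $7$ points --- and the paper passes from one to the other via the even order of $X_1^Q$ and \cite[Lemma~2.1]{sharp}.) Second, the paper's case division is not by the socle of $X$ alone but by whether $\soc(X)=\soc(X_{\b 0})$: when the socles coincide, normality of $\soc(X_{\b 0})$ in $X$ forces $\delta=m$ and hence the repetition code (Lemma~\ref{equalsocles}); when they differ, the Liebeck--Praeger--Saxl inclusion classification (Proposition~\ref{2tr2hominc}) reduces everything to the short list of Table~\ref{2tr2hom}, and the divisibility condition $\binom{m}{2}(q-1)^2\mid|X_{\b 0}|$ (Lemma~\ref{codeorder}) together with the Singleton bound and the classification of $2$-regular codes in $H(11,2)$ (Proposition~\ref{equivpunchad}) finishes the job. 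Your proposal to rule out each socle ``by arguing that the design is too restrictive'' is precisely the part that needs a method, and none is given.

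In the converse direction your characterisation of the repetition examples is wrong: it is not true that every $2$-transitive $X\leq S_m$ makes the binary repetition code entry-faithful $(X,2)$-neighbour transitive. Transitivity on $C=\{\b 0,\b 1\}$ forces $|X:X_{\b 0}|=2$, and entry-faithfulness forces the complementing element to project to a permutation outside $\mu(X_{\b 0})$; so the correct condition, as recorded in Table~\ref{repetitiongroups}, is that $X$ possess an index-$2$ subgroup acting $2$-homogeneously on $M$. In particular simple $2$-transitive groups such as $M_{11}$ or $\PSL_2(11)$ in their degree-$11$ actions do \emph{not} give examples for the repetition code, and conversely $S_m$ is not the only group that arises. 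This is not a cosmetic slip: getting Table~\ref{repetitiongroups} right is part of the statement being proved.
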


\begin{table}
\begin{center}
\begin{tabular}{ccl}
\hline\noalign{\smallskip}
 $X$ & $m$ & conditions\\
 \noalign{\smallskip}\hline\noalign{\smallskip}
 $\leq \AGaL_d(r)$ & $r^d$ & $X$ has a $2$-homogeneous index $2$ subgroup\\
 $S_m$ & $m$ &\\
 $M_{22}\rtimes Z_2$ & $22$ &\\
 $\unrhd \PSU_3(r)$ & $r^6+1$ & $X$ has an index $2$ subgroup\\
 $\unrhd \PSL_d(r)$ & $\frac{r^d-1}{r-1}$ & $X$ has an index $2$ subgroup\\
 \noalign{\smallskip}\hline
\end{tabular}
\caption{Groups $X$ such that the binary repetition code in $H(m,2)$ is entry-faithful $(X,2)$-neighbour transitive.}
\label{repetitiongroups}
\end{center}
\end{table}

The punctured Hadamard $12$ code $\P$ and the even weight subcode $\E$ of $\P$ are defined in Section~\ref{nontrivsec}. The Hadamard $12$ and punctured Hadamard $12$ codes were shown to be completely transitive by the first and fourth authors \cite{Gillespie20131394}. We prove in Lemma~\ref{punchadamard}, however, that $\E$ is not $(X,3)$-neighbour transitive, for any group $X$. We prove the following with regards to entry faithful completely transitive codes:

\begin{theorem}\label{comptrans}
 Let $C$ be a code in $H(m,q)$ with $|C|\geq 2$ and minimum distance $\delta\geq 5$. Then $C$ is entry-faithful $X$-completely transitive for some group $X$, if and only if $q=2$ and $C$ is equivalent to the binary repetition code. In this case $X\cong S_m$.
\end{theorem}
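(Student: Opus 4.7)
The plan is to deduce Theorem~\ref{comptrans} from Theorem~\ref{main}, Lemma~\ref{punchadamard}, and an inspection of Table~\ref{repetitiongroups}. Since $\delta \geq 5$, a sphere-packing argument gives that the covering radius $\rho$ of $C$ satisfies $\rho \geq 2$, so every entry-faithful $X$-completely transitive code is, in particular, entry-faithful $(X,2)$-neighbour transitive. Applying Theorem~\ref{main} leaves two cases: (i) $C$ is equivalent to a binary repetition code with $X$ appearing in some row of Table~\ref{repetitiongroups}, or (ii) $C$ is equivalent to the even weight subcode $\E$ of the punctured Hadamard $12$ code with $X \cong M_{11}$.

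To eliminate case (ii), I would first verify, via a short sphere-covering estimate (the $12$ codewords of $\E$ in $H(11,2)$ cover at most $12 \cdot (1+11+55) < 2^{11}$ vertices within radius $2$), that the covering radius of $\E$ is at least $3$. Complete transitivity would then force $\E$ to be $(M_{11},3)$-neighbour transitive, contradicting Lemma~\ref{punchadamard}.

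Case (i) requires the main work. For the binary repetition code $C = \{\mathbf{0},\mathbf{1}\}$ in $H(m,2)$, I would set up the standard parameterisation of its entry-faithful stabilisers: any entry-faithful $X \leq \Aut(H(m,2))$ preserving $C$ projects isomorphically onto a subgroup $H \leq S_m$ under the entry action, and the elements of $X$ that fix (rather than swap) the two codewords project to a subgroup $K \leq H$ of index $2$ exactly when $X$ is transitive on $C$. For $1 \leq i < m/2$, the neighbour set $C_i$ is the union of all binary words of weight $i$ and all of weight $m-i$, and a direct orbit computation shows that $X$ is transitive on $C_i$ if and only if $K$ is $i$-homogeneous on the entry set. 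Since the repetition code has covering radius $\lfloor m/2 \rfloor$, entry-faithful $X$-complete transitivity forces $K$ to be $k$-homogeneous on $m$ points for every $k \leq \lfloor (m-1)/2 \rfloor$.

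It remains to work through Table~\ref{repetitiongroups} and show that only the $X = S_m$ row survives this strong homogeneity constraint. For $X = S_m$, the index-$2$ subgroup $K = A_m$ is $(m-2)$-transitive and hence $k$-homogeneous for every $k \leq m-2$, confirming that the binary repetition code is indeed $S_m$-completely transitive. For every other row, $K$ lies inside an almost simple or affine group whose order grows only polynomially in the permutation degree $m$, whereas $\lfloor m/2 \rfloor$-homogeneity demands $|K| \geq \binom{m}{\lfloor m/2 \rfloor}$, which is exponential in $m$; this dispatches all sufficiently large parameter choices. The finitely many small-parameter configurations subject to $m \geq 5$ are then handled by consulting the classification of multiply transitive and multiply homogeneous groups, using in particular that $M_{22}$, $\PSU_3(r)$ and $\PSL_d(r)$ are at most $3$-transitive and so, by Livingstone--Wagner, fail $k$-homogeneity for $k \geq 4$. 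I expect the main technical obstacle to be the careful bookkeeping of these small-parameter affine and classical cases, where exceptional high homogeneity could in principle occur and must be individually ruled out.
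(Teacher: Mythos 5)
Your overall skeleton matches the paper's: deduce the converse from Theorem~\ref{main}, discard the even weight subcode $\E$ via Lemma~\ref{punchadamard} (which in fact states directly that $\E$ is not completely transitive, so your sphere-covering estimate is not needed), and reduce the repetition-code case to a homogeneity condition on the index-two subgroup $K$ fixing $\b 0$. The forward direction (that $\Rep(m,2)$ is $S_m$-completely transitive) is also essentially the paper's construction. However, there is a genuine gap in the elimination of the remaining rows of Table~\ref{repetitiongroups}.

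The condition you extract --- that $K$ be $k$-homogeneous for all $k\leq\lfloor(m-1)/2\rfloor$, coming from transitivity on $C_i$ for $i<m/2$ --- says nothing about the middle layer $C_{m/2}$ when $m$ is even, and two table entries genuinely satisfy your necessary condition: $X=\PGL_2(5)$ with $m=6$ (here $K=\PSL_2(5)$ is $2$-homogeneous and only $k\leq 2$ is required) and $X=\PGL_2(7)$ with $m=8$ (here $K=\PSL_2(7)$ is $3$-homogeneous on the projective line and only $k\leq 3$ is required). Neither your order-growth bound nor the Livingstone--Wagner appeal touches these, since the homogeneity degree demanded is actually attained. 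The paper kills them by analysing transitivity of the full group $X$ on the weight-$m/2$ vertices: for $\PGL_2(7)$ a divisibility count ($|C_4|=70$ but $5\nmid|\PGL_2(7)|$), and for $\PGL_2(5)$ a finer fusion argument --- $\PSL_2(5)$ has two orbits $\O_1,\O_2$ on weight-$3$ vertices (complementary $2$-$(6,3,2)$ designs), and an outer element $h\sigma$ with $h=((01),\ldots,(01))$ satisfies $\O_1^{h\sigma}=\O_2^{\sigma}=\O_1$, so the orbits are never fused. Your proof needs this extra $i=m/2$ analysis; as written it would wrongly leave $\PGL_2(5)$ and $\PGL_2(7)$ as surviving candidates. (A smaller quibble: you invoke $\lfloor m/2\rfloor$-homogeneity in the asymptotic step, but your own derivation only yields $\lfloor(m-1)/2\rfloor$-homogeneity.)
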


\begin{remark}
 In the unpublished work \cite{gillespie2012classification}, available on the ArXiv, the first, second, and fourth authors deal with the case where $C$ is entry-faithful $X$-completely transitive. They deduce that, if $|C|\geq 2$ and $\delta\geq 5$, $C$ must be the repetition code and $X\cong S_m$. Due to the fact that the methods used here are substantially more straight-forward, we have included some of the results from \cite{gillespie2012classification}. Thus, this paper is a complete account which largely replaces the results found in \cite{gillespie2012classification} (and we note that \cite{gillespie2012classification} will not be published).
\end{remark}

In Proposition~\ref{equivpunchad} we classify all $2$-regular codes in $H(11,2)$ such that $\delta\geq 5$ and $|C|\geq 2$. We find that such a code is either the repetition code, the punctured code of the Hadamard code of length 12, or the even weight subcode of the punctured Hadamard code. This can be seen as an extension of \cite[Theorem 1.1(b)]{Gillespie20131394}, which gives the punctured code of the Hadamard 12 code as the only completely regular code with $\delta=6$ in $H(11,2)$. 

Our study of $(X,2)$-neighbour transitive codes requires the identification of all finite $2$-transitive groups which do not contain the alternating group; however, do contain a $2$-homogeneous subgroup with a different socle. Proposition~\ref{2tr2hominc} gives a classification of all groups $G<H\leq S_m$ such that $G$ is $2$-transitive of degree $m$, $H$ is $2$-homogeneous of degree $m$, the socle of $G$ is not $A_m$ and the socles of $G$ and $H$ are not equal.

Section~\ref{prelim} introduces notation for Hamming graphs and their automorphism groups, as well as regular codes and designs. The proof of Theorem~\ref{main} hinges on Propositions~\ref{ihom} and~\ref{x12trans}, and Corollary~\ref{2transM}, which give two different $2$-transitive actions for $X$, and also tell us that the stabiliser $X_{\b 0}$ of the zero codeword, must have a $2$-homogeneous action on entries. This allows us to use the powerful results of \cite{devillers2011imprimitive} and \cite{praeger1990inclusion}. In Section~\ref{trivsec}, we deal with the cases that the socle of $X$, that is, the group generated by all minimal normal subgroups of $X$, either is the alternating group $A_m$ or is equal to the socle of $X_{\b 0}$. In Section~\ref{nontrivsec}, we deal with the case that the socle of $X$ is not $A_m$ and not equal to the socle of $X_{\b 0}$.  This allows us to complete the proof of the main results.

\section{Preliminaries}\label{prelim}

Let $M$ and $Q$ be sets of size $m$ and $q$ respectively, with $m,q\geq 2$. We refer to $M$ as the \emph{set of entries} and $Q$ as the \emph{alphabet}. The vertex set of the Hamming graph $\varGamma=H(m,q)$ consists of all $m$-tuples with entries labelled by the set $M$, taken from the set $Q$. An edge exists between two vertices if they differ as $m$-tuples in exactly one entry. For vertices $\alpha,\beta\in\varGamma$ the \textit{Hamming distance} $d(\alpha,\beta)$ (that is the distance in $\varGamma$) is the number of entries in which $\alpha$ and $\beta$ differ.

The \emph{minimum distance} of a code $C$ is $\delta=\min\{d(\alpha,\beta)\mid \alpha,\beta\in C,\alpha\neq \beta\}$. For $\alpha\in\varGamma$, define $\varGamma_r(\alpha)=\{\beta\in\varGamma \mid d(\alpha,\beta)=r\}$. Given $\alpha\in\varGamma$, define $d(\alpha,C)=\min\{d(\alpha,\beta) \mid \beta\in C\}$.  We then have the \textit{covering radius} $\rho =\max\{d(\alpha,C)\mid\alpha\in\varGamma\}$. For any $r\leq \rho$, define $C_r=\{\alpha\in\varGamma \mid d(\alpha,C)=r\}$. Note that $C_i$ is the disjoint union $\cup_{\alpha\in C}\varGamma_i(\alpha)$ for $i\leq \lfloor\frac{\delta-1}{2}\rfloor$. For $\alpha\in \varGamma$, we refer to the $i$-th entry of $\alpha$ as $\alpha_i$, so that $\alpha=(\alpha_1,\ldots,\alpha_m)$.

The automorphism group $\Aut(\varGamma)$ of the Hamming graph is the semi-direct product $N\rtimes L$, where $N\cong S_q^m$ and $L\cong S_m$ (see \cite[Theorem 9.2.1]{brouwer}). We refer to $L$ as the \emph{top group} of $\Aut(\varGamma)$. Let $g=(g_1,\dots,g_m)\in N$, $\sigma\in L$ and $\alpha\in\varGamma$. Then $g$ and $\sigma$ act on $\alpha$ as follows: 
\begin{equation*}
\alpha^g =(\alpha_1^{g_1},\ldots,\alpha_m^{g_m})\quad\text{and}\quad
\alpha^\sigma=(\alpha_{1{\sigma^{-1}}},\ldots,\alpha_{m{\sigma^{-1}}}).
\end{equation*}

The automorphism group of a code $C$ in $\varGamma$ is $\Aut(C)=\Aut(\varGamma)_C$, the setwise stabiliser of $C$ in $\Aut(\varGamma)$. Throughout the paper, the group $X$ is a subgroup of $\Aut(C)$.

The \emph{distance distribution of $C$} is the $(m+1)$-tuple $a(C)=(a_0,\ldots,a_m)$ where 
\begin{equation}\label{ai}
 a_i=\frac{|\{(\alpha,\beta)\in C\times C\,:\,d(\alpha,\beta)=i\}|}{|C|}.
\end{equation}   
We observe that $a_i\geq 0$ for all $i$ and $a_0=1$.  Moreover, $a_i=0$ for $1\leq i\leq \delta-1$ and $|C|=\sum_{i=0}^ma_i$. In the case where $q$ is a prime power, the \emph{MacWilliams transform} of $a(C)$ is the $(m+1)$-tuple $a'(C)=(a_0',\ldots,a_m')$ where 
\begin{equation}\label{kracheqn} 
 a'_k:=\sum_{i=0}^ma_iK_k(i)
\end{equation}
with 
\begin{equation*}
 K_k(x):=\sum_{j=0}^k(-1)^j\binom{x}{j}\binom{m-x}{k-j}(q-1)^{k-j}.
\end{equation*}
It follows from \cite[Lemma~5.3.3]{van1999introduction} that $a'_k\geq 0$ for $k=0,1,\ldots,m$. 

Let $0$ denote a distinguished element of the alphabet $Q$. For $\alpha\in V(\varGamma)$, the \emph{support of $\alpha$} is the set $\supp(\alpha)=\{i\in M\,:\,\alpha_i\neq 0\}$. The \emph{weight of $\alpha$} is defined as $\wt(\alpha)=|\supp(\alpha)|$. For any $a\in Q\backslash\{0\}$ we use the notation $(a^k,0^{m-k})$ to denote the vertex in $V(\varGamma)$ that has $a$ in the first $k$ entries, and $0$ in the remaining entries, and if $k=0$ we denote the vertex by ${\bf{0}}$.  

\begin{lemma}\label{supplem}
 Let $\alpha={\bf{0}}$ and $x=(g_1,\ldots,g_m)\sigma\in\Aut(\varGamma)_\alpha$. Then $\supp(\beta^x)=\supp(\beta)^\sigma$ for all $\beta\in V(\varGamma)$.     
\end{lemma}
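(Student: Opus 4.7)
The plan is to unpack the definition of $\supp$ entry-by-entry, using the fact that the stabiliser condition $\mathbf{0}^x = \mathbf{0}$ forces each $g_i$ to fix the distinguished symbol $0 \in Q$. First I would observe that writing $\mathbf{0}^{(g_1,\ldots,g_m)\sigma} = (0^{g_{1\sigma^{-1}}},\ldots,0^{g_{m\sigma^{-1}}}) = \mathbf{0}$ forces $0^{g_i} = 0$ for every $i \in M$, since $\sigma$ is a bijection on $M$. Hence each coordinate permutation $g_i$ fixes the symbol $0$, so in particular $\beta_i^{g_i} \neq 0$ if and only if $\beta_i \neq 0$.

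Next, I would compute $\beta^x$ componentwise. Using the two actions given just before the lemma, the $i$-th coordinate of $\beta^x = (\beta^{(g_1,\ldots,g_m)})^\sigma$ is $\beta_{i\sigma^{-1}}^{g_{i\sigma^{-1}}}$. By the previous paragraph this entry is nonzero precisely when $\beta_{i\sigma^{-1}} \neq 0$, i.e.\ when $i\sigma^{-1} \in \supp(\beta)$. Equivalently, $i \in \supp(\beta)^\sigma$. Collecting indices, this gives $\supp(\beta^x) = \supp(\beta)^\sigma$, as required.

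The argument is essentially a bookkeeping verification, so there is no substantive obstacle: the only point requiring any care is keeping the $\sigma$ versus $\sigma^{-1}$ conventions straight in the componentwise expression for $\beta^x$, and ensuring one uses the hypothesis $\mathbf{0}^x = \mathbf{0}$ to extract the fact that every $g_i$ stabilises $0$. Since $\beta$ is arbitrary in $V(\varGamma)$, the conclusion holds for all vertices, and the lemma is established.
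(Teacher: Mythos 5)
Your proof is correct and follows essentially the same route as the paper's: both extract from $\mathbf{0}^x=\mathbf{0}$ that each $g_i$ fixes $0$, and then observe that the $i$-th entry of $\beta^x$ is nonzero precisely when the $i^{\sigma^{-1}}$-th entry of $\beta$ is. Your version merely spells out the componentwise computation that the paper leaves implicit.
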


\begin{proof}
 Since each $g_i$ fixes $0$ and $\sigma$ permutes coordinates, the $i$th entry of $\beta^x$ is non-zero if and only if the $i^{\sigma^{-1}}$ entry of $\beta$ is non-zero.  Thus the result follows.  
\end{proof}

For $\alpha=(\alpha_i)$, $\beta=(\beta_i)\in V(\varGamma)$, we let $\Diff(\alpha,\beta)=\{i\in M\,:\,\alpha_i\neq\beta_i\}.$ Now suppose $|C|\geq 2$ and $\alpha,\beta\in C$.  Then we let $$\Diff(\alpha,\beta,C)=\{\gamma\in C\,:\,\Diff(\alpha,\gamma)=\Diff(\alpha,\beta)\}.$$ 
By definition, $\beta\in\Diff(\alpha,\beta,C)$, so $\Diff(\alpha,\beta,C)\neq\emptyset$.

\begin{lemma}\label{favvertex}
 Let $C$ be a code with minimum distance $\delta$ and $|C|\geq 2$, and let $\alpha,\beta\in C$ such that $d(\alpha,\beta)=\delta$. Then for all $a\in Q$, there exists $x\in\Aut(\varGamma)$ such that the following two conditions hold.
\begin{itemize}
 \item[(i)] $\alpha^x=(a,\ldots,a)$, and
 \item[(ii)] for each $\gamma\in\Diff(\alpha,\beta,C)$, $\gamma^x=(c^{\delta},a^{m-\delta})$ for some $c\in  Q\backslash\{a\}$.    
\end{itemize}
\end{lemma}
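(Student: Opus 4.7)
The plan is to construct $x = g\sigma$ explicitly, where $\sigma \in L$ reorders the entries so that $\Diff(\alpha,\beta)$ becomes $\{1,\dots,\delta\}$, and $g=(g_1,\dots,g_m)\in N$ applies coordinate-wise alphabet permutations that collapse $\alpha$ to $(a,\dots,a)$ and, simultaneously, collapse each $\gamma\in\Diff(\alpha,\beta,C)$ to a constant string on its support.

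The crucial preparatory observation is about the structure of $\Diff(\alpha,\beta,C)$. If $\gamma,\gamma'\in\Diff(\alpha,\beta,C)$ are distinct, then by definition $\gamma$ and $\gamma'$ both agree with $\alpha$ off $\Diff(\alpha,\beta)$, so $\Diff(\gamma,\gamma')\subseteq\Diff(\alpha,\beta)$, a set of size $\delta$. But $d(\gamma,\gamma')\geq\delta$ by the minimum distance hypothesis, forcing $\Diff(\gamma,\gamma')=\Diff(\alpha,\beta)$. Hence, for every $i\in\Diff(\alpha,\beta)$, the entries $\gamma_i$ as $\gamma$ ranges over $\Diff(\alpha,\beta,C)$ are pairwise distinct, and all distinct from $\alpha_i$ (since $i\in\Diff(\alpha,\gamma)$). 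In particular $|\Diff(\alpha,\beta,C)|\leq q-1$, so we may fix an injection $\gamma\mapsto c_\gamma$ from $\Diff(\alpha,\beta,C)$ into $Q\setminus\{a\}$.

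Now choose any $\sigma\in L=\Sym(M)$ mapping $\Diff(\alpha,\beta)$ onto $\{1,\dots,\delta\}$. For each $i\in M$, define $g_i\in\Sym(Q)$ as follows. If $i\in\Diff(\alpha,\beta)$, then by the paragraph above the values $\alpha_i$ and $\{\gamma_i:\gamma\in\Diff(\alpha,\beta,C)\}$ are pairwise distinct, and similarly the prescribed images $a$ and $\{c_\gamma\}$ are pairwise distinct in $Q$; hence we may pick any $g_i$ sending $\alpha_i\mapsto a$ and $\gamma_i\mapsto c_\gamma$ for each $\gamma$. If $i\notin\Diff(\alpha,\beta)$, pick any $g_i$ with $g_i(\alpha_i)=a$.

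Set $x=g\sigma$ with $g=(g_1,\dots,g_m)$. Using the action conventions $(\alpha^g)_i=\alpha_i^{g_i}$ and $(\alpha^\sigma)_i=\alpha_{i^{\sigma^{-1}}}$, a direct computation gives $(\alpha^x)_i=\alpha_{i^{\sigma^{-1}}}^{g_{i^{\sigma^{-1}}}}=a$ for all $i$, establishing (i). For $\gamma\in\Diff(\alpha,\beta,C)$ and $i\in\{1,\dots,\delta\}$, $j=i^{\sigma^{-1}}\in\Diff(\alpha,\beta)$, so $(\gamma^x)_i=\gamma_j^{g_j}=c_\gamma$; for $i\in\{\delta+1,\dots,m\}$, $j\notin\Diff(\alpha,\beta)$ gives $\gamma_j=\alpha_j$ and hence $(\gamma^x)_i=a$, establishing (ii). The only step requiring real content is the opening distance argument; once that combinatorial constraint is in hand, the remainder is a straightforward assembly of independent coordinate-wise choices.
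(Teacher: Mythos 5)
Your proof is correct and follows essentially the same route as the paper's: the key combinatorial step (distinct elements of $\Diff(\alpha,\beta,C)$ differ in every entry of $\Diff(\alpha,\beta)$ by the minimum distance bound, hence the values at each such entry together with $\alpha_i$ are pairwise distinct, so $|\Diff(\alpha,\beta,C)|\leq q-1$) is exactly the paper's argument, and the explicit assembly of $x=g\sigma$ is the same construction. The only cosmetic difference is that the paper fixes $g_k$ to be the transposition $(a\,\,\alpha_k)$ off the support rather than an arbitrary permutation sending $\alpha_k$ to $a$, which changes nothing.
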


\begin{proof}
 Let $\Diff(\alpha,\beta,C)=\{\beta^1,\ldots,\beta^s\}$. It follows that $(\beta^i)_k=\alpha_k$ for each $i\leq s$ and $k\in  M\backslash\Diff(\alpha,\beta)$.  Therefore, because $C$ has minimum distance $\delta$, $d(\beta^i,\beta^j)=\delta$ for each distinct pair $\beta^i,\beta^j\in\Diff(\alpha,\beta,C)$.  This implies that for each $k\in\Diff(\alpha,\beta)$, the $s+1$ entries $\alpha_k,(\beta^1)_k,\ldots,(\beta^s)_k$ are pairwise distinct elements of $Q$.  Thus $s\leq q-1$.  Let $a\in Q$ and $\{c_1,\ldots,c_s\}\subseteq Q\backslash\{a\}$.  Since $S_q$ acts $q$-transitively on $Q$, it follows that for each $k\in \Diff(\alpha,\beta)$ there exists $h_k\in S_q$ such that $(\beta^i)_k^{h_k}=c_i$ for each $i\leq s$ and $\alpha_k^{h_k}=a$.  Also for each $k\in M\backslash\Diff(\alpha,\beta)$ let $h_k=(a\,\,\alpha_k)\in S_q$. Now let $h=(h_1,\ldots,h_m)\in S_q^m$.  Since $S_m$ acts $m$-transitively on $M$ and $|\Diff(\alpha,\beta)|=\delta\leq m$, there exists $\sigma\in S_m$ such that $\Diff(\alpha,\beta)^\sigma=\{1,\ldots,\delta\}$.  Let $x=h\sigma\in\Aut(\varGamma)$.  Then $\alpha^x=(a,\ldots,a)$ and $(\beta^i)^x=(c_i^\delta,a^{m-\delta})$ for each $i=1,\ldots,s$.      
\end{proof}

We say that two codes, $C$ and $C'$, in $H(m,q)$, are \textit{equivalent} if there exists $x\in \Aut(\varGamma)$ such that $C^x=C'$. Since elements of $\Aut(\varGamma)$ preserve distance, equivalence preserves minimum distance. 

Finally, we denote the \emph{set of $k$-subsets} of a set $\Omega$ by $\Omega^{\{k\}}$. For a set $\Omega$ and group $G\leq\Sym(\Omega)$, we say $G$ acts \emph{$k$-homogeneously on $\Omega$} if $G$ acts transitively on $\Omega^{\{k\}}$. 

\subsection{$s$-Neighbour Transitive Codes}\label{sntr}

\begin{definition}
 Let $C$ be a code in $H(m,q)$ and $X\leq\Aut(C)$. If $X$ is transitive on $C,C_1,\ldots, C_s$, then we say $C$ is an \emph{$(X,s)$-neighbour transitive} code, or simply an \emph{$s$-neighbour transitive} code, if $X$ is clear from the context. Moreover, $X$-completely transitive codes (defined in Section \ref{intro}) are the $(X,\rho)$-neighbour transitive codes.
\end{definition}

\begin{remark}\label{remfav}
 Let $x\in\Aut(\varGamma)$, and let $C$ be an $(X,s)$-neighbour transitive code with minimum distance $\delta$.  By an approach similar to that used in \cite[Lemma~2]{gillespiediadntc}, it holds that $C^x$ is $(X^x,s)$-neighbour transitive, and because minimum distance is preserved by equivalence, $C^x$ has minimum distance $\delta$.  Thus for any $a\in Q\backslash\{0\}$, Lemma \ref{favvertex} allows us to replace $C$ with an equivalent $(X,s)$-neighbour transitive code with minimum distance $\delta$ that contains both ${\bf{0}}$ and $(a^\delta,0^{m-\delta})$.          
\end{remark}

There are two additional actions of $X\leq \Aut(\varGamma)$ which will be useful to us. First we consider the action of $X$ on the set of entries $M$, which we will write as $X^M$, defined by the following homomorphism:

\begin{center}
\begin{tabular}{cccc}
 $\mu$ :& $X$ &$\longrightarrow$ & $S_m$\\
& $(h_1,\ldots,h_m)\sigma$ &$\longmapsto$ & $\sigma$ 
\end{tabular}
\end{center}

\begin{proposition}\label{ihom} 
 Let $C$ be an $(X,s)$-neighbour transitive code with minimum distance $\delta$. Then for $\alpha\in C$ and $i\leq\min\{s,\lfloor\frac{\delta-1}{2}\rfloor\}$, the stabiliser $X_\alpha$ fixes setwise and acts transitively on $\varGamma_i(\alpha)$.  In particular, $X_\alpha$ acts $i$-homogeneously on $M$.    
\end{proposition}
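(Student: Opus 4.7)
The plan is to prove the three assertions in sequence: setwise stabilisation of $\Gamma_i(\alpha)$, transitivity of $X_\alpha$ on $\Gamma_i(\alpha)$, and then $i$-homogeneity of $X_\alpha$ on $M$. The hypothesis $i\leq\min\{s,\lfloor(\delta-1)/2\rfloor\}$ will be used in two crucial ways: the bound $i\leq\lfloor(\delta-1)/2\rfloor$ gives the disjoint-sphere decomposition $C_i=\bigsqcup_{\beta\in C}\Gamma_i(\beta)$ already noted in Section~\ref{prelim}, while the bound $i\leq s$ supplies transitivity of $X$ on $C_i$.

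For the setwise part, I would note that $X_\alpha$ preserves Hamming distances and stabilises $C$, hence permutes the set of spheres $\{\Gamma_i(\beta) : \beta\in C\}$. Because $i\leq\lfloor(\delta-1)/2\rfloor$, these spheres are pairwise disjoint, so $\Gamma_i(\alpha)$ is characterised among them as the unique sphere consisting of vertices at distance exactly $i$ from $\alpha$. Thus $X_\alpha$ must fix $\Gamma_i(\alpha)$ setwise. For the transitivity step, take $\beta,\beta'\in\Gamma_i(\alpha)$; since $i\leq s$ and $C$ is $(X,s)$-neighbour transitive, there exists $x\in X$ with $\beta^x=\beta'$. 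Then $\alpha^x\in C$ satisfies $d(\alpha^x,\beta')=d(\alpha,\beta)=i$, and by the disjointness of spheres, $\alpha$ is the only codeword within distance $i$ of $\beta'$. Hence $\alpha^x=\alpha$, so $x\in X_\alpha$.

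For the $i$-homogeneity on $M$, define, for $\beta\in\Gamma_i(\alpha)$, the difference set $D(\beta)=\Diff(\alpha,\beta)\in M^{\{i\}}$. A direct computation mirroring the proof of Lemma~\ref{supplem} shows that for $x=(g_1,\ldots,g_m)\sigma\in X_\alpha$, the condition $\alpha^x=\alpha$ forces $\alpha_j=\alpha_{j^{\sigma^{-1}}}^{g_j}$, and therefore $(\beta^x)_j\neq\alpha_j$ if and only if $\beta_{j^{\sigma^{-1}}}\neq\alpha_{j^{\sigma^{-1}}}$, so $D(\beta^x)=D(\beta)^\sigma$. Moreover, the map $D\colon\Gamma_i(\alpha)\to M^{\{i\}}$ is surjective, since for any $i$-subset $S\subseteq M$ we may build a vertex $\beta$ with $\beta_j\neq\alpha_j$ for $j\in S$ and $\beta_j=\alpha_j$ otherwise (using $q\geq 2$), giving $\beta\in\Gamma_i(\alpha)$ with $D(\beta)=S$. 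Combining the transitivity of $X_\alpha$ on $\Gamma_i(\alpha)$ with this equivariant surjection yields transitivity of $\mu(X_\alpha)$ on $M^{\{i\}}$, i.e., $i$-homogeneity of $X_\alpha$ on $M$.

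No single step is a genuine obstacle; the main care point is invoking the correct bound on $i$ at each stage (disjointness uses $i\leq\lfloor(\delta-1)/2\rfloor$, transitivity on $C_i$ uses $i\leq s$), and verifying that the difference-set map $D$ behaves equivariantly for an arbitrary (not necessarily zero) codeword $\alpha$, which is a mild generalisation of Lemma~\ref{supplem} and is verified by the coordinate-wise computation sketched above.
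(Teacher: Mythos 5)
Your proof is correct and follows essentially the same route as the paper's: transitivity of $X$ on $C_i$ combined with the disjointness of the spheres $\varGamma_i(\beta)$ for $i\leq\lfloor(\delta-1)/2\rfloor$ forces the transporting element to fix $\alpha$, and an equivariant surjection onto $M^{\{i\}}$ then yields $i$-homogeneity. The only cosmetic difference is that the paper first normalises to $\alpha=\mathbf{0}$ via Remark~\ref{remfav} and argues with $\supp$ and Lemma~\ref{supplem}, whereas you work with $\Diff(\alpha,\cdot)$ for a general codeword; the coordinate-wise computations are the same.
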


\begin{proof} 
 By replacing $C$ with an equivalent code if necessary, Remark~\ref{remfav} allows us to assume that $\alpha={\bf{0}}\in C$.  Firstly, because automorphisms of the Hamming graph preserve distance, it follows that $X_\alpha\leq X_{\varGamma_i(\alpha)}$.  Now let $\nu_1,\nu_2\in\varGamma_i(\alpha)$.  As $C_i$ is an $X$-orbit, and because $\varGamma_i(\alpha)\subseteq C_i$, there exists $x\in X$ such that $\nu_1^x=\nu_2$. Suppose $x\notin X_\alpha$.  Then $\alpha\neq \alpha^x\in C$, and so $d(\alpha,\alpha^x)\geq \delta$.  However, $d(\alpha,\alpha^x)\leq 2i<\delta$, which is a contradiction.  Thus $X_\alpha$ acts transitively on $\varGamma_i(\alpha)$.  

 Finally, let $J_1$, $J_2\in M^{\{i\}}$, and $\nu, \gamma\in V(\varGamma)$ such that $\supp(\nu)=J_1$ and $\supp(\gamma)=J_2$. It follows that $\nu,\gamma\in\varGamma_i(\alpha)\subseteq C_i$.  As $X_\alpha$ acts transitively on $\varGamma_i(\alpha)$, there exists $x=(g_1,\ldots,g_m)\sigma\in X_\alpha$ such that $\nu^x=\gamma$.  A consequence of Lemma \ref{supplem} is that
  $J_1^\sigma=\supp(\nu)^\sigma=\supp(\nu^x)=\supp(\gamma)=J_2$. Hence $X_\alpha$ acts $i$-homogeneously on $M$. 
\end{proof}

\begin{corollary}\label{multtrans}
 Let $C$ be an $(X,s)$-neighbour transitive code with  minimum distance $\delta$.  Then for each $i\leq\min\{s,\lfloor\frac{\delta-1}{2}\rfloor\}$ and $I\in M^{\{i\}}$, the setwise stabiliser $X_I$ acts transitively on $C$. 
\end{corollary}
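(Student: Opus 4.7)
The plan is to deduce the corollary directly from Proposition~\ref{ihom} combined with the transitivity of $X$ on $C$. Given any $\alpha,\beta\in C$, the goal is to produce an element of $X_I$ carrying $\alpha$ to $\beta$; a standard ``change of representative'' argument using the $i$-homogeneity of the point stabiliser should suffice.

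More concretely, first I would fix arbitrary $\alpha,\beta\in C$ and use the $X$-transitivity on $C$ to pick some $x\in X$ with $\alpha^x=\beta$. This $x$ need not lie in $X_I$, but its image $J:=I^{\mu(x)}$ is still an element of $M^{\{i\}}$, since $\mu(x)\in S_m$ preserves the size of subsets of $M$. The task then reduces to adjusting $x$ on the right by a stabiliser element that sends $J$ back to $I$ while fixing $\beta$.

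The key input is Proposition~\ref{ihom}: since $i\leq\min\{s,\lfloor(\delta-1)/2\rfloor\}$, the stabiliser $X_\beta$ acts $i$-homogeneously on $M$, hence transitively on $M^{\{i\}}$. Thus there exists $z\in X_\beta$ with $J^{\mu(z)}=I$. Setting $y=xz$, the element $y$ sends $\alpha$ to $\beta^z=\beta$, and its image under $\mu$ sends $I$ to $I^{\mu(x)\mu(z)}=J^{\mu(z)}=I$, so $y\in X_I$. Because $\alpha,\beta\in C$ were arbitrary, this establishes transitivity of $X_I$ on $C$.

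There is no serious obstacle here: the only subtlety is to ensure the bound on $i$ from Proposition~\ref{ihom} matches the one in the corollary, which it does by hypothesis, and to be careful that $\mu$ is a homomorphism so that composing elements of $X$ corresponds to composing their induced permutations of $M$. Everything else is a direct application of the earlier proposition.
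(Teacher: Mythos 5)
Your proof is correct and is essentially the paper's own argument: the paper simply observes that transitivity of $X$ on $C$ together with transitivity of $X_\alpha$ on $M^{\{i\}}$ (Proposition~\ref{ihom}) gives transitivity of $X$ on $C\times M^{\{i\}}$, hence of $X_I$ on $C$, and your change-of-representative computation is exactly the standard proof of that product-transitivity fact, written out explicitly.
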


\begin{proof} 
 By definition $C$ is $(X,i)$-neighbour transitive and, by Proposition \ref{ihom}, $X_\alpha$ acts transitively on the set $M^{\{i\}}$ of $i$-subsets of $M$. Hence $X$ is transitive on $C\times M^{\{i\}}$, and so $X_I$ is transitive on $C$. 
\end{proof}

Next we consider the action of the subgroup $X_i\leq X$, stabilising the entry $i\in M$, on the alphabet $Q$. We denote this action by $X_i^Q$ and it is defined by the homomorphism:

\begin{center}
\begin{tabular}{cccc}
 $\varphi_i$ :& $X_i$ &$\longrightarrow$ & $S_q$\\
& $(h_1,\ldots,h_m)\sigma$ &$\longmapsto$ & $h_i$ 
\end{tabular}
\end{center}

\begin{proposition}\label{x12trans} 
 Let $C$ be an $(X,1)$-neighbour transitive code in $H(m,q)$ with $\delta\geq 3$ and $|C|>1$.  Then $X_1^{Q}$ acts $2$-transitively on $Q$.
\end{proposition}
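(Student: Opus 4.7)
The plan is to establish the two standard criteria for $X_1^Q$ to be $2$-transitive on $Q$: transitivity on $Q$, and transitivity of a point-stabiliser on the complement. By Remark~\ref{remfav} I may assume ${\bf{0}}\in C$, and Proposition~\ref{ihom} (with $\alpha={\bf{0}}$ and $i=1$, noting $\lfloor(\delta-1)/2\rfloor\geq 1$) tells us that $X_{{\bf{0}}}$ acts transitively on the neighbour set $\varGamma_1({\bf{0}})$ and transitively on $M$.

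First I would produce the point-stabiliser transitivity. The support map $\beta\mapsto\supp(\beta)$ from $\varGamma_1({\bf{0}})$ to $M$ is $X_{{\bf{0}}}$-equivariant by Lemma~\ref{supplem}, so its fibres $F_i=\{\beta\in\varGamma_1({\bf{0}}):\supp(\beta)=\{i\}\}$ form a system of blocks. The setwise stabiliser in $X_{{\bf{0}}}$ of $F_1$ is exactly $X_{{\bf{0}}}\cap X_1$, so transitivity of $X_{{\bf{0}}}$ on $\varGamma_1({\bf{0}})$ forces $X_{{\bf{0}}}\cap X_1$ to be transitive on $F_1=\{(a,0,\ldots,0):a\in Q\setminus\{0\}\}$. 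For $x=(h_1,\ldots,h_m)\sigma\in X_{{\bf{0}}}\cap X_1$ each $h_i$ fixes $0$ and $\sigma$ fixes $1$, so $(a,0,\ldots,0)^x=(a^{h_1},0,\ldots,0)$ and the induced action on $F_1$ factors through $\varphi_1$. Hence $(X_{{\bf{0}}}\cap X_1)^Q$ fixes $0$ and is transitive on $Q\setminus\{0\}$; in particular the stabiliser of $0$ in $X_1^Q$ is transitive on $Q\setminus\{0\}$.

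It remains to show $X_1^Q$ is transitive on $Q$. The strategy is to produce a codeword $\alpha\in C$ with $\alpha_1\neq 0$: by Corollary~\ref{multtrans} applied with $i=1$ and $I=\{1\}$, $X_1$ acts transitively on $C$, so any $x\in X_1$ with ${\bf{0}}^x=\alpha$ satisfies $0^{h_1}=\alpha_1\neq 0$, witnessing that $X_1^Q$ does not fix $0$. To exhibit such an $\alpha$, argue by contradiction: if every codeword has first entry $0$, then since $|C|\geq 2$ there is a nonzero $\beta\in C$ with $\beta_j\neq 0$ for some $j\neq 1$, and transitivity of $X_{{\bf{0}}}$ on $M$ yields $y=(h_1,\ldots,h_m)\sigma\in X_{{\bf{0}}}$ with $\sigma^{-1}(1)=j$. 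Since $y$ fixes ${\bf{0}}$, each $h_i$ fixes $0$, and the first entry of $\beta^y\in C$ equals $\beta_j^{h_j}\neq 0$, a contradiction. Combining the two halves yields the desired $2$-transitivity. The main obstacle is precisely this bridge from ``$X_1$ transitive on $C$'' to an element whose alphabet action moves $0$, which the existence of a codeword with nonzero first entry supplies; the remainder is routine orbit-and-block bookkeeping.
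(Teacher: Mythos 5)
Your proof is correct and follows essentially the same route as the paper: both halves of $2$-transitivity are obtained exactly as in the paper's proof, using transitivity of $X_{\b 0}$ on $\varGamma_1(\b 0)$ (via Lemma~\ref{supplem}) to make $(X_1^Q)_0$ transitive on $Q\setminus\{0\}$, and Corollary~\ref{multtrans} to move $\b 0$ to a codeword with nonzero first entry. The only cosmetic difference is that the paper gets such a codeword directly by normalising to $\beta=(a^\delta,0^{m-\delta})$ via Remark~\ref{remfav}, whereas you derive its existence by a short contradiction argument using transitivity of $X_{\b 0}$ on $M$; both are fine.
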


\begin{proof} 
 Let $a\in Q\backslash\{0\}$.  By replacing $C$ with an equivalent code if necessary, Remark \ref{remfav} allows us to assume that $\alpha={\bf{0}}$ and $\beta=(a^\delta,0^{m-\delta})$ are two codewords of $C$.  Choose any $b\in Q\backslash\{0\}$.  As $\delta\geq 3$ it follows that $\nu_1=(a,0^{m-1})$, $\nu_2=(b,0^{m-1})\in\varGamma_1(\alpha)\subseteq C_1$.  By Proposition~\ref{ihom}, there exists $x=(g_1,\ldots,g_m)\sigma\in X_\alpha$ such that $\nu_1^x=\nu_2$.  Consequently, Lemma \ref{supplem} implies that $1^\sigma=1$.  Thus $a^{g_1}=b$, and because $x\in X_\alpha$, we conclude that $g_1\in(X_1^{Q})_0$, the stabiliser of $0$ in $X_1^Q$.  Hence $(X_1^{Q})_0$ acts transitively on $Q\backslash\{0\}$. By Corollary~\ref{multtrans}, $X_1$ acts transitively on $C$.  Hence there exists $y=(h_1,\ldots,h_m)\pi\in X_1$ such that $\alpha^y=\beta$.  As $y\in X_1$ we have that $0^{h_1}=a$ and $h_1\in X_1^Q$.  Thus $X_1^Q$ acts $2$-transitively on $Q$. 
\end{proof}

\begin{definition}
 Let $C$ be an $(X,s)$-neighbour transitive code in $H(m,q)$. If $X^M$ is faithful, in other words $X\cap N\cong S_q^m=1$, then we say $C$ is an \emph{entry-faithful} $(X,s)$-neighbour transitive code.
\end{definition}

\begin{corollary}\label{2transM} 
 Let $C$ be an entry-faithful $(X,2)$-neighbour transitive code, with $|C|\geq 2$ and $\delta\geq 5$. Then $X$ acts $2$-transitively on $M$.  
\end{corollary}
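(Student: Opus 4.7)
The plan is to combine the $2$-homogeneity of $X^M$ coming from Proposition~\ref{ihom} with a parity argument coming from Proposition~\ref{x12trans}, which together force $X^M$ to be $2$-transitive on $M$.

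First, since $\delta\geq 5$ gives $\lfloor(\delta-1)/2\rfloor\geq 2$, Proposition~\ref{ihom} applied with $i=2$ and any $\alpha\in C$ shows that $X_\alpha$ acts $2$-homogeneously on $M$. Hence $X\supseteq X_\alpha$ also acts $2$-homogeneously on $M$, so $X^M$ is a $2$-homogeneous subgroup of $S_m$.

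Next, I produce an involution in $X^M$ via the alphabet action. By Proposition~\ref{x12trans}, whose hypotheses $\delta\geq 3$ and $|C|>1$ are satisfied, the group $X_1^Q$ is $2$-transitive on $Q$, so orbit--stabiliser on ordered pairs of distinct alphabet letters forces $q(q-1)$ to divide $|X_1^Q|$. Since $q\geq 2$ the integer $q(q-1)$ is even, so $|X_1^Q|$ is even. As $X_1^Q$ is the image of $X_1$ under $\varphi_1$, $|X_1|$ is even, and hence $|X|$ is even. Entry-faithfulness makes $\mu$ injective, so $|X^M|=|X|$ is even, and Cauchy's theorem produces an involution $t\in X^M$.

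Any non-identity involution in $S_m$ is a product of disjoint transpositions, so $t$ swaps some pair $\{i,j\}\subseteq M$. Given an arbitrary ordered pair $(k,l)$ of distinct entries, $2$-homogeneity of $X^M$ supplies $g\in X^M$ with $\{i,j\}^g=\{k,l\}$; then either $g$ itself sends $(i,j)\mapsto(k,l)$, or else $tg$ does. Thus the $X^M$-orbit of $(i,j)$ contains every ordered pair of distinct entries, i.e.\ $X^M$ is $2$-transitive on $M$. I do not anticipate a real obstacle here: the key insight is simply that the alphabet $2$-transitivity of Proposition~\ref{x12trans}, transferred through entry-faithfulness, automatically produces an involution in $X^M$ whose existence, combined with the already established $2$-homogeneity, is precisely $2$-transitivity.
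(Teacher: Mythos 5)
Your proof is correct and follows essentially the same route as the paper: $2$-homogeneity of $X^M$ from Proposition~\ref{ihom}, even order of $X$ from Proposition~\ref{x12trans} via entry-faithfulness, and then the classical fact that a $2$-homogeneous group of even order is $2$-transitive. The only difference is that the paper cites \cite[Lemma~2.1]{sharp} for that last step, whereas you supply its standard involution argument directly; both are fine.
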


\begin{proof} 
 By Proposition \ref{ihom}, $X_\alpha$ acts $2$-homogeneously on $M$, and so $X$ has a faithful $2$-homogeneous action on $M$.  By Proposition~\ref{x12trans}, $X_1^Q$ acts $2$-transitively on $Q$.  Thus $X_1^Q$ has even order, and so $X$ has even order.  Therefore, by \cite[Lemma~2.1]{sharp}, $X$ acts $2$-transitively on $M$.       
\end{proof}

The following result gives us restrictions on the orders of the groups $X$ and $X_{\b 0}$.

\begin{lemma}\label{codeorder}
 If $C$ is an $(X,2)$-neighbour transitive code with $\delta\geq 5$ and $\b 0\in C$, then $\binom{m}{2} (q-1)^2$ divides $|X_{\b 0}|$, and hence $|X|$. In particular, if $|X_{\b 0}|=m(m-1)/2$ then $q=2$.
\end{lemma}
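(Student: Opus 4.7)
The plan is to invoke Proposition~\ref{ihom} with $s=i=2$ (valid since $\delta \geq 5$ gives $\lfloor (\delta-1)/2\rfloor \geq 2$), which tells us that $X_{\b 0}$ fixes setwise and acts transitively on $\varGamma_2(\b 0)$. Then I would just compute the cardinality: a vertex in $\varGamma_2(\b 0)$ is one that differs from $\b 0$ in exactly two entries, so it is specified by a choice of $2$-subset of $M$ (in $\binom{m}{2}$ ways) together with an assignment of a nonzero element of $Q$ to each of those two entries (in $(q-1)^2$ ways). Hence $|\varGamma_2(\b 0)| = \binom{m}{2}(q-1)^2$.

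By orbit-stabiliser applied to the transitive action of $X_{\b 0}$ on $\varGamma_2(\b 0)$, the integer $\binom{m}{2}(q-1)^2$ divides $|X_{\b 0}|$, and since $X_{\b 0} \leq X$ it also divides $|X|$.

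For the "in particular" clause, suppose $|X_{\b 0}| = m(m-1)/2 = \binom{m}{2}$. Then the divisibility just established forces $\binom{m}{2}(q-1)^2 \mid \binom{m}{2}$, so $(q-1)^2 \mid 1$, giving $q-1 = 1$ and hence $q=2$.

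There is essentially no obstacle: the only ingredient is Proposition~\ref{ihom} (already proved in the paper) plus orbit-stabiliser and a one-line count of $|\varGamma_2(\b 0)|$. The proof will be only a few lines long.
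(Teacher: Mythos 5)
Your proposal is correct and follows exactly the paper's own argument: transitivity of $X_{\b 0}$ on $\varGamma_2(\b 0)$ from Proposition~\ref{ihom}, the count $|\varGamma_2(\b 0)|=\binom{m}{2}(q-1)^2$, and orbit--stabiliser. The paper's proof is just a terser version of the same reasoning, so there is nothing to add.
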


\begin{proof}
 Since $\delta\geq 5$ we have that $X_{\b 0}$ is transitive on $\varGamma_2(\b 0)$. Thus $|\varGamma_2(\b 0)|=\binom{m}{2} (q-1)^2$ divides $|X_{\b 0}|$. 
\end{proof}

\subsection{Regular Codes and Designs}\label{regular}

\begin{definition}\label{regcode}
 Let $C$ be a code with covering radius $\rho$ and $s$ be an integer with $0\leq s\leq \rho$. Then $C$ is \emph{$s$-regular} if for each $i$, $0\leq i \leq s$, each vertex $\nu\in C_i$, and each $k$, $0\leq k \leq m$, the number $|\varGamma_k(\nu)\cap C|$ depends only on $i$ and $k$. If $s=\rho$ then we say $C$ is \emph{completely regular}.
\end{definition}

Let $\alpha\in H(m,q)$ and $0\in Q$. The vertex $\nu$ is said to be \emph{covered} by $\alpha$, if for every $i\in M$ such that $\nu_i\neq 0$ we have $\nu_i=\alpha_i$. 

\begin{definition}\label{desdef}
 A \emph{$q$-ary $s$-$(v,k,\lambda)$ design} is a subset $\mathcal{D}$ of vertices of $\varGamma_k(\b 0)$ (where $k\geq s$) such that each vertex $\nu \in\varGamma_s(\b 0)$ is covered by exactly $\lambda$ vertices of $\mathcal{D}$. When $q=2$, $\D$ is simply the set of characteristic vectors of an $s$-design. We refer to the elements of $\D$ as \emph{blocks}.
\end{definition}

We will use the following equations, which can be found for instance in \cite{stinson2004combinatorial}. Let $\D$ be a binary $s$-$(v,k,\lambda)$ design with $|\D|=b$ blocks and let $r$ be the number of blocks incident with a point. We then have $vr=bk$, $r(k-1)=\lambda(v-1)$ and $$b=\frac{v(v-1)\cdots(v-s+1)}{k(k-1)\cdots(k-s+1)}\lambda.$$

The following is \cite[Theorem~2.4.7]{van1976uniformly}. Note that if there are no codewords of a given weight then we have the empty design, with $\lambda=0$.

\begin{theorem}\label{regdes}
 Let $C$ be an $s$-regular code such that $\b 0\in C$ and $\delta\geq 2s$. Then for each $k$ such that $0\leq k\leq m$, the set of codewords from $C$ of weight $k$ forms a $q$-ary $s$-$(m,k,\lambda)$ design, for some $\lambda$.
\end{theorem}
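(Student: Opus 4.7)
The plan is to proceed by induction on $s$, with the base case $s=0$ vacuous, since a $0$-design imposes no condition. For the inductive step with $s \geq 1$, note that $C$ being $s$-regular with $\delta \geq 2s$ automatically implies $s'$-regularity with $\delta \geq 2s'$ for every $s' < s$, so by the outer inductive hypothesis the weight-$k$ codewords of $C$ already form a $q$-ary $s'$-$(m,k,\lambda_{s'})$ design for all such $s'$ and all $k$.

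Fix a vertex $\nu$ with $\wt(\nu)=s$. I would first show $\nu \in C_s$: by the triangle inequality and $\delta \geq 2s$, any non-zero codeword $\gamma$ satisfies $d(\nu,\gamma)\geq \wt(\gamma)-s \geq \delta - s \geq s$, so $\b 0$ is the unique nearest codeword. A short calculation with the partition of $M$ into the four ``agreement classes'' $P_1,P_2,P_3,P_4$ (positions on $\supp(\nu)$ where $\gamma$ agrees with $\nu$, is non-zero but differs, is zero, and positions outside $\supp(\nu)$ where $\gamma$ is non-zero) shows that if $\wt(\gamma)=k$ and $d(\nu,\gamma)=k-s$, then the system $|P_1|+|P_2|+|P_3|=s$, $|P_1|+|P_2|+|P_4|=k$, $|P_2|+|P_3|+|P_4|=k-s$ forces $|P_2|=|P_3|=0$, so $\gamma$ covers $\nu$. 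Thus $f_k(\nu):=|\{\gamma\in C:\wt(\gamma)=k,\,\gamma\text{ covers }\nu\}|$ equals the number of weight-$k$ codewords at distance $k-s$ from $\nu$, and $s$-regularity applied at $\nu\in C_s$ implies that $|\varGamma_{k-s}(\nu)\cap C|$ depends only on $s$ and $k-s$.

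To isolate $f_k(\nu)$, I would perform an inner induction on $k$ (trivial for $k<\delta$). For $k\geq\delta$, decompose $|\varGamma_{k-s}(\nu)\cap C|=f_k(\nu)+\sum_{w<k}N_{k-s,w}(\nu)$, where $N_{k-s,w}(\nu)$ counts the weight-$w$ codewords at distance $k-s$ from $\nu$; the triangle inequality and minimum distance force $w\in\{0\}\cup[\delta,k]$, and the $w=0$ contribution is $1$ if $k=2s$ and $0$ otherwise. For each $\delta\leq w<k$, classify such codewords by their agreement set $T=\{i\in\supp(\nu):\gamma_i=\nu_i\}\subsetneq\supp(\nu)$. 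A M\"obius-style inclusion-exclusion expresses the number of weight-$w$ codewords with agreement set of a given size $s_1\leq s-1$ in terms of the covering constants $\lambda_{t,w}$ for $t\geq s_1$, each of which is supplied by the outer inductive hypothesis (when $t<s$) or by the inner inductive hypothesis (when $t=s$, since $w<k$ gives $f_w(\nu)=\lambda_{s,w}$). The additional refinement imposed by $d(\nu,\gamma)=k-s$, namely specifying the number of non-zero mismatches on $\supp(\nu)\setminus T$, is handled by a second layer of inclusion-exclusion over the non-zero values at those positions, using the same inductive constants. This shows $N_{k-s,w}(\nu)$ is independent of $\nu$, hence $f_k(\nu)$ is too, closing both inductions.

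The main obstacle is the detailed combinatorial bookkeeping in this last step, especially when pattern classes correspond to auxiliary vertices whose weight may exceed $s$ and thus fall outside the direct scope of the inductive design hypotheses; these cases must be routed back to the hypotheses via a careful layered inclusion-exclusion that respects the constants $\lambda_{s',w}$ and $\lambda_{s,w}$ already established. This is the technical heart of van Tilborg's original argument~\cite[Theorem~2.4.7]{van1999introduction}.
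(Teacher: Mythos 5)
The paper offers no proof of this theorem: it is imported verbatim from van Tilborg \cite[Theorem~2.4.7]{van1976uniformly}, so the only comparison available is with that original argument, which your sketch essentially reconstructs. Taken on its own terms, every load-bearing step you give is correct: the observation that a weight-$s$ vertex $\nu$ lies in $C_s$ (since $\wt(\gamma)\geq\delta\geq 2s$ for every codeword $\gamma\neq\b 0$), the partition computation forcing $|P_2|=|P_3|=0$, so that the weight-$k$ codewords at distance $k-s$ from $\nu$ are exactly those covering $\nu$, the restriction of the remaining weights at that distance to $\{0\}\cup[\delta,k]$ with the $w=0$ term contributing only when $k=2s$, and the double induction that recovers $f_k(\nu)$ from the $s$-regularity constant $|\varGamma_{k-s}(\nu)\cap C|$ by subtracting terms already known to be independent of $\nu$.

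One reassurance about the step you single out as the technical heart: the ``auxiliary vertices whose weight may exceed $s$'' that you worry about never occur. For a weight-$w$ codeword $\gamma$, both $d(\nu,\gamma)=w-t_1+t_3$ and the agreement/zero pattern are determined by the restriction of $\gamma$ to $\supp(\nu)$ together with $w$ (the number of non-zero entries of $\gamma$ off $\supp(\nu)$ is forced to equal $w-t_1-t_2$), so every vertex fed into your two layers of inclusion--exclusion has support inside $\supp(\nu)$ and hence weight at most $s$; its covering constant is then supplied by the outer hypothesis (weight $<s$, any block weight) or the inner one (weight $s$, block weight $w<k$), and the residual bookkeeping is routine M\"obius inversion with coefficients depending only on the sizes involved and on $q$. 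Two minor blemishes: the base case $s=0$ is not vacuous but trivial (the constant $\lambda$ is the total number of weight-$k$ codewords), and your closing citation points to van Lint's textbook rather than van Tilborg's thesis, which is the source the paper actually names.
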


\begin{definition}\label{repcode}
 The \emph{repetition code in $H(m,q)$}, denoted by $\Rep(m,q)$, is equal to the set of vertices of the form $(a,\ldots,a)$, for all $a\in Q$.  It has minimum distance $\delta=m$.
\end{definition}  

The binary repetition code is one of the two entry-faithful $2$-neighbour transitive codes in Theorem~\ref{main}. The next result shows that it is in fact the only $2$-regular code such that $4\leq \delta=m$.

\begin{lemma}\label{sizeofcode}
 Let $C$ be a code with $|C|\geq 2$ and $\delta=m$. Then there exists $C'$ equivalent to $C$ with $C'\subseteq\Rep(m,q)$.  Moreover if $C$ is $1$-regular then $C'=\Rep(m,q)$; if $C$ is $2$-regular and $m\geq 4$ then $C'=\Rep(m,2)$.
\end{lemma}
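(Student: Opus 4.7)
The plan is to proceed in three stages, each making direct use of an earlier tool from the excerpt.

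\textbf{Stage 1: producing the equivalent subcode of the repetition code.} Fix any two distinct codewords $\alpha,\beta\in C$. Since $\delta=m$, every $\gamma\in C\setminus\{\alpha\}$ must satisfy $d(\alpha,\gamma)\geq m$, hence $=m$, so $\Diff(\alpha,\gamma)=M=\Diff(\alpha,\beta)$. In other words, $\Diff(\alpha,\beta,C)=C\setminus\{\alpha\}$. I would now invoke Lemma~\ref{favvertex} with $a=0$ to produce $x\in\Aut(\varGamma)$ such that $\alpha^x={\bf 0}$ and every other codeword of $C^x$ has the form $(c^\delta,0^{m-\delta})=(c,\ldots,c)$ with $c\in Q\setminus\{0\}$. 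Setting $C'=C^x$ gives $C'\subseteq\Rep(m,q)$.

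\textbf{Stage 2: the $1$-regular case.} Now assume $C$ is $1$-regular; then so is $C'$ (regularity is preserved by equivalence). Since $\delta=m\geq 2\cdot 1$, Theorem~\ref{regdes} applies with $s=1$ and $k=m$, telling us that the weight-$m$ codewords of $C'$ form a $q$-ary $1$-$(m,m,\lambda)$ design. A weight-$1$ vertex $\nu$ with $\nu_i=b\neq 0$ is covered by a constant codeword $(a,\ldots,a)$ if and only if $a=b$, and by Stage~1 this gives at most one covering block. By $1$-regularity $\lambda$ is a constant independent of $(i,b)$, so either $\lambda=0$ (forcing $C'=\{{\bf 0}\}$, contradicting $|C|\geq 2$) or $\lambda=1$. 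In the latter case $(b,\ldots,b)\in C'$ for every $b\in Q\setminus\{0\}$, hence $\Rep(m,q)\subseteq C'$, and combined with Stage~1 we conclude $C'=\Rep(m,q)$.

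\textbf{Stage 3: the $2$-regular case with $m\geq 4$.} Since $2$-regular implies $1$-regular, Stage~2 already gives $C'=\Rep(m,q)$; it remains to force $q=2$. Because $\delta=m\geq 4=2\cdot 2$, Theorem~\ref{regdes} with $s=2$, $k=m$ says the weight-$m$ codewords of $C'$ form a $q$-ary $2$-$(m,m,\lambda_2)$ design: every weight-$2$ vertex is covered by exactly $\lambda_2$ constant codewords. For $\nu=(b,b,0^{m-2})$ with $b\neq 0$ the only constant codeword covering $\nu$ is $(b,\ldots,b)$, giving $\lambda_2=1$. Supposing $q\geq 3$, I can pick distinct nonzero $b_1,b_2\in Q$ and form $\nu'=(b_1,b_2,0^{m-2})$; no constant vector $(a,\ldots,a)$ has both entries equal to $\nu'_1$ and $\nu'_2$, so $\lambda_2=0$, a contradiction. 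Hence $q=2$ and $C'=\Rep(m,2)$.

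The one step that warrants care is checking that Lemma~\ref{favvertex} really does constant-ify all of $C\setminus\{\alpha\}$, not only $\beta$; this is exactly what the observation $\Diff(\alpha,\beta,C)=C\setminus\{\alpha\}$ (forced by $\delta=m$) delivers. Everything else is a short design-covering count, so this is the main (mild) obstacle.
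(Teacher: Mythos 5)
Your proof is correct. Stage 1 is exactly the paper's argument: the observation that $\delta=m$ forces $\Diff(\alpha,\beta,C)=C\setminus\{\alpha\}$, followed by Lemma~\ref{favvertex}, is precisely how the paper obtains $C'\subseteq\Rep(m,q)$ with $\b 0\in C'$. Where you diverge is in the two regularity claims: you route both through Theorem~\ref{regdes}, reading off that the weight-$m$ codewords must form a $q$-ary $1$-design (forcing every constant vector into $C'$) and, for $m\geq 4$, a $q$-ary $2$-design (which fails for $q\geq 3$ because the ``mixed'' weight-$2$ vertices $(b_1,b_2,0^{m-2})$ with $b_1\neq b_2$ are covered by no constant codeword while $(b,b,0^{m-2})$ is covered by exactly one). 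The paper instead argues directly from the definition of $s$-regularity, comparing $|\varGamma_{m-1}(\nu)\cap C'|$ for suitable vertices $\nu\in C'_1$ (respectively $C'_2$) and deriving the same contradictions; this keeps the proof self-contained, handles $m=2,3$ in the $1$-regular case explicitly, and does not lean on the van Tilborg theorem. Your version is cleaner and equally valid given that Theorem~\ref{regdes} is already available in the paper (its hypotheses $\b 0\in C'$ and $\delta\geq 2s$ are satisfied in both applications, as you check); the two arguments are ultimately two phrasings of the same counting constraint, with yours trading a little self-containedness for brevity.
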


\begin{proof}
 Let $0,a\in Q$. Since $\delta=m$, Lemma \ref{favvertex} implies that there is a code $C'$ equivalent to $C$ which contains $\alpha=(0,\ldots,0)$ and $\beta=(a,\ldots,a)$, and each $\gamma\in C'\backslash\{\alpha,\beta\}$ at distance $\delta=m$ from $\alpha$ is of the form $(b,\ldots,b)$ for some $b\in Q\backslash\{0,a\}$. Thus, $C'$ is a subset of the repetition code $\Rep(m,q)$, and in particular $|C|=|C'|\leq q$.    
 
 Assume $C$, and hence $C'$, is $1$-regular. Suppose $|C'|<q$.  Then there exists $b\in Q\backslash\{0,a\}$ such that $b$ does not appear in any codeword of $C'$.  Let $\nu_1=(a,0^{m-1})$ and $\nu_2=(b,0^{m-1})$.  Then $\nu_1,\nu_2\in C'_1$.  However, $|\varGamma_{m-1}(\nu_1)\cap C'|=2$ if $m=2$, and $1$ if $m\geq 3$, while $|\varGamma_{m-1}(\nu_2)\cap C'|=1$ if $m=2$ and $0$ if $m\geq 3$, which is a contradiction.  Therefore $|C'|=q$ and $C'=\Rep(m,q)$.      

 Now assume that $C$, and hence $C'$, is $2$-regular, and that $m\geq 4$. Let $\nu_3=(a,a,0^{m-2})$.  As $\nu_3$ has weight $2$ and $\delta=m\geq 4$, we have $\nu_3\in C'_2$. Let $\gamma=(c,\ldots,c)$ be an arbitrary element of $C'$. Then 
 \[d(\nu_3,\gamma)=\left\{\begin{array}{cl} 2&\textnormal{if
  $c=0$,}\\ m-2&\textnormal{if 
 $c=a$,}\\
 m&\textnormal{if $c\in
 Q\backslash\{0,a\}$.}\end{array}\right.\]  
 Since $m\geq 4$ it follows that $\varGamma_{m-1}(\nu_3)\cap C=\emptyset$. Therefore, because $C'$ is $2$-regular, $\varGamma_{m-1}(\nu)\cap C'=\emptyset$ for all $\nu\in C'_2$.  Now suppose that $q\geq 3$ and consider $\nu_4=(b,a,0^{m-2})\in C'_2$, where $b\neq 0,a$. Then $d(\nu_4,\beta)=m-1$, which is a contradiction. Therefore $q=2$, and we note that $\Rep(m,2)$ is $2$-regular. 
\end{proof}

The following shows that an $(X,s)$-neighbour transitive code is $s$-regular, which allows us to use the above results.

\begin{lemma}\label{design}
 Let $C$ be an $(X,s)$-neighbour transitive code. Then $C$ is $s$-regular. Furthermore, if $\b 0\in C$ and $\delta\geq 2s$ then the set of codewords of weight $k\leq m$ forms a $q$-ary $s$-$(m,k,\lambda)$ design, for some $\lambda$.
\end{lemma}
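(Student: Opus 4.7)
The plan is to first establish $s$-regularity directly from the transitivity hypothesis, and then invoke Theorem~\ref{regdes} to obtain the design conclusion.

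For the first assertion, let $0 \leq i \leq s$ and $\nu, \nu' \in C_i$. Since $C$ is $(X,s)$-neighbour transitive, $C_i$ is an $X$-orbit, so there exists $x \in X$ with $\nu^x = \nu'$. The element $x$ lies in $\Aut(C) \leq \Aut(\varGamma)$, so $x$ preserves Hamming distance and setwise stabilises $C$. Hence $x$ maps $\varGamma_k(\nu) \cap C$ bijectively onto $\varGamma_k(\nu') \cap C$ for every $k$ with $0 \leq k \leq m$, and in particular $|\varGamma_k(\nu) \cap C| = |\varGamma_k(\nu') \cap C|$. Thus this quantity depends only on $i$ and $k$, which is exactly Definition~\ref{regcode}, so $C$ is $s$-regular.

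For the second assertion, we now have that $C$ is an $s$-regular code with $\b 0 \in C$ and $\delta \geq 2s$. These are precisely the hypotheses of Theorem~\ref{regdes}, which immediately yields that the set of codewords of weight $k$ forms a $q$-ary $s$-$(m,k,\lambda)$ design for some $\lambda$ (possibly $\lambda = 0$, in which case the weight-$k$ subcode is empty).

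There is no substantial obstacle: the first part is essentially an observation that transitivity of a group of graph automorphisms on $C_i$ forces the numerical regularity condition, and the second part is a direct quotation of the previously stated Theorem~\ref{regdes}.
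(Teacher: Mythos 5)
Your proof is correct and follows exactly the paper's argument: transitivity of $X$ on each $C_i$ gives an automorphism carrying $\nu$ to $\nu'$, which preserves distance and stabilises $C$, forcing $|\varGamma_k(\nu)\cap C|=|\varGamma_k(\nu')\cap C|$, and the design statement is then immediate from Theorem~\ref{regdes}. The only difference is that you spell out the bijection $\varGamma_k(\nu)\cap C\to\varGamma_k(\nu')\cap C$ explicitly, which the paper leaves implicit.
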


\begin{proof}
 Let $i$ be such that $0\leq i\leq s$ and $\nu,\mu\in C_i$. Then there exists an $x\in X$ such that $\nu^x=\mu$. It follows that $|\varGamma_k(\nu)\cap C|=|\varGamma_k(\mu)\cap C|$ for all $k$ with $0\leq k\leq m$ and $C$ is $s$-regular. If $\b 0\in C$ we can apply Theorem~\ref{regdes}. 
\end{proof}

\section{The Socles of $X$ and $X_{\b 0}$}\label{trivsec}

In Proposition~\ref{x12trans}, Proposition~\ref{ihom} and Corollary~\ref{2transM} we saw that an $(X,2)$-neighbour transitive code has two different $2$-transitive actions of $X$, and that the stabiliser $X_{\b 0}$ of the zero codeword is $2$-homogeneous on $M$. A $2$-homogeneous group $G$ is either \emph{affine} or \emph{almost-simple}, depending on the structure of the \emph{socle} of $G$, denoted $\soc(G)$, that is, the group generated by all the minimal normal subgroups of $G$ (see, for example, \cite{dixon1996permutation}). The group is affine if the socle is a regular elementary abelian group, and the group is almost-simple if the socle is a non-abelian simple group.

The following remark, which relies directly on \cite[Theorem 1.4]{devillers2011imprimitive}, will allow us to greatly reduce the possibilities for the parameters of an entry-faithful $(X,2)$-neighbour transitive code $C$ of a given group $X$.

\begin{remark}\label{imprimrank3}
 Let $C$ be an entry-faithful $(X,2)$-neighbour transitive code in the Hamming graph $H(m,q)$. Then, by Proposition~\ref{x12trans}, $X_1$ acts $2$-transitively on $Q$ in the first entry and, by Corollary~\ref{2transM}, $X$ acts $2$-transitively on $M$. It follows that $X_1^Q(\leq \Sym(Q))$, $X^M(\leq S_m)$ and $X$ satisfy \cite[Hypothesis 1]{devillers2011imprimitive} ($G$ there being our $X$). That is, $X_1^Q$ and $X^M$ are $2$-transitive, $X\leq X_1^Q\wr X^M$ acting imprimitively on $Q\times M$, with $X$ projecting onto $X^M$, and $X_1^Q$ being, what is referred to there as, the component of $X$. Thus, since $X$ is almost simple and acts faithfully on $M$, we can apply \cite[Theorem 1.4]{devillers2011imprimitive}, so that the possible values of $X$, $m$, and $q$ are listed, as $G$, $n$, and $|B|$, in one of the lines of \cite[Tables 2 or 3]{devillers2011imprimitive}.
\end{remark}

The socle of $X_{\b 0}$ may or may not be the same as the socle of $X$. In the case that they are the same, the next lemma shows that the size of an $(X,2)$-neighbour transitive code is at most $2$.

\begin{lemma}\label{equalsocles}
 Let $C$ be an entry-faithful $(X,2)$-neighbour transitive code in $H(m,q)$ such that $|C|\geq 2$, $\delta\geq 5$, and $\soc(X)=\soc(X_{\b 0})$. Then $C$ is the binary repetition code and $X$ is isomorphic to one of the groups in Table~\ref{repetitiongroups}.
\end{lemma}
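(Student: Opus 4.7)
The plan is to exploit that $T := \soc(X) = \soc(X_{\b 0})$ lies inside $X_{\b 0}$ and fixes every codeword, which forces $\delta = m$; the code must then be the binary repetition code, and the classification of $X$ follows from the list of finite $2$-transitive groups. To begin, since $\soc(X_{\b 0}) \leq X_{\b 0}$, the subgroup $T$ fixes $\b 0$. By Corollary~\ref{multtrans} (applied with $i=0$), $X$ acts transitively on $C$; and for any $\alpha = \b 0^g \in C$ and $t \in T$, normality of $T$ gives $g^{-1}tg \in T \leq X_{\b 0}$, whence $\alpha^t = \b 0^{(g^{-1}tg)g} = \alpha$. Thus $T$ fixes every codeword.

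Next I show $\delta = m$. By Corollary~\ref{2transM}, $X$ is $2$-transitive on $M$, so $T$ is transitive on $M$ (regular elementary abelian in the affine case, or $2$-transitive non-abelian simple in the almost-simple case). For any $\alpha \in C \setminus \{\b 0\}$, Lemma~\ref{supplem} shows that $T$ preserves $\supp(\alpha)$ setwise, so transitivity forces $\supp(\alpha) = M$, giving $\wt(\alpha) = m$. The same argument applied to $\Diff(\alpha,\beta)$ for distinct $\alpha,\beta \in C$ yields $d(\alpha,\beta) = m$. Hence $\delta = m$. Since $C$ is $2$-regular by Lemma~\ref{design} and $m = \delta \geq 5 \geq 4$, Lemma~\ref{sizeofcode} concludes that $C$ is equivalent to $\Rep(m,2)$; in particular $q = 2$ and $|C| = 2$, so $[X : X_{\b 0}] = 2$.

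Finally, since $X$ is entry-faithful, $X \cong X^M$ is a $2$-transitive subgroup of $S_m$ in which $X_{\b 0}$ is a $2$-homogeneous subgroup of index $2$ (Proposition~\ref{ihom}) with $\soc(X_{\b 0}) = \soc(X)$. I then appeal to the classification of finite $2$-transitive groups, splitting into affine and almost-simple cases, to enumerate the allowable $X$. In the almost-simple case, any index $2$ subgroup $H$ of $X$ automatically contains $T$, since $H \cap T \trianglelefteq T$ and $H \cap T = 1$ would force $T \hookrightarrow X/H \cong Z_2$, contradicting that $T$ is non-abelian simple; so the socle condition is automatic once a suitable index $2$ subgroup exists. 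The entries of Table~\ref{repetitiongroups} arise as: affine $X \leq \AGaL_d(r)$ with a $2$-homogeneous index $2$ subgroup; almost-simple with socle $A_m$ (giving $X = S_m$), $M_{22}$ (giving $X = M_{22} \rtimes Z_2$), $\PSL_d(r)$, or $\PSU_3(r)$, each with an appropriate index $2$ overgroup. The main obstacle is this concluding enumeration, which requires inspecting outer automorphism groups of the relevant simple groups against the existence of the required index $2$ subgroup.
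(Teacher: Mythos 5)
Your proof is correct and follows essentially the same route as the paper: the normal socle fixes every codeword and preserves supports/difference sets, forcing $\delta=m$; then $2$-regularity and Lemma~\ref{sizeofcode} give the binary repetition code; and the final step reduces to enumerating $2$-transitive groups with a $2$-homogeneous index-$2$ subgroup sharing the socle. The only cosmetic differences are that the paper routes that last enumeration through \cite[Tables 2 and 3]{devillers2011imprimitive} rather than the raw classification of $2$-transitive groups (equivalent here since $q=2$), and there is a small slip in your conjugation: $\alpha^t=\b 0^{(gtg^{-1})g}$, not $\b 0^{(g^{-1}tg)g}$.
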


\begin{proof}
 By Remark~\ref{remfav}, we can assume $\b 0\in C$. By Proposition~\ref{ihom}, $X$ acts $2$-homogeneously on $M$, and thus $X$ acts primitively on $M$. Since $H=\soc(X_{\b 0})$ is a normal subgroup of $X$, it follows that $H$ acts transitively on $M$. Moreover, as $X$ is transitive on $C$, the stabiliser of each codeword is conjugate in $X$. Now, $H$ is normal in $X$, and thus $H$ is contained in the stabiliser of every codeword from $C$. Let $\beta\in C$ such that $d(\b 0,\beta)=\delta$. Without loss of generality, by Remark~\ref{remfav}, $\beta=(b^\delta,0^{m-\delta})$. Let $x=(h_1,\ldots,h_m)\sigma\in H$. Then $x$ fixes $\b 0$, and so $h_i$ fixes $0$ for all $i$. Since $x$ also fixes $\beta$, it follows that $\sigma$ fixes each of the two subsets of entries $\{1,\ldots,\delta\}$ and $\{\delta+1,\ldots,m\}$ setwise. This is true for each $x\in H$, so $H$ fixes $\{1,\ldots,\delta\}$ setwise. Since $H$ acts transitively on $M$, it follows that $\delta=m$. Finally, we apply Lemma~\ref{sizeofcode}, since any $2$-neighbour transitive code is also $2$-regular, and conclude that $q=2$ and $C$ is equivalent to the binary repetition code.
 
 Now $X^M$ is $2$-transitive, and is thus affine or almost simple. Suppose $X$ is affine. In this case, since $|X:X_{\b 0}|=|C|=2$, it follows that $X$ is any $2$-transitive subgroup of $\AGL_d(q)$, which contains an index two subgroup $X_{\b 0}$ acting $2$-homogeneously on $M$, with $\soc(X)=\soc(X_{\b 0})$. Suppose, then, that $X$ is almost simple. By Remark~\ref{imprimrank3}, $X$, $m$, and $q$ appear (as $G$, $n$, and $|B|$ respectively) in \cite[Tables 2 and 3]{devillers2011imprimitive}. We also require $X$ to have a $2$-homogeneous (and hence, by \cite{kantor1972k}, $2$-transitive) index two subgroup $X_{\b 0}$, with $\soc(X)=\soc(X_{\b 0})$. All possibilities for $X$ are then contained in Table~\ref{repetitiongroups}. 
\end{proof}

The next result deals with the possibility that $X\cong A_m$ or $S_m$.

\begin{lemma}\label{amsm}
 Let $C$ be an $(X,2)$-neighbour transitive code in $H(m,q)$, such that $\delta\geq 5$ and $|C|\geq 2$, where $X$ is entry-faithful and $X\cong A_m$ or $S_m$. Then $C$ is equivalent to the binary repetition code and $X\cong S_m$.
\end{lemma}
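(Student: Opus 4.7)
The plan is to deduce the result from Lemma~\ref{equalsocles} by establishing that $\soc(X_{\b 0}) = A_m = \soc(X)$. Once this is done, Lemma~\ref{equalsocles} gives that $C$ is equivalent to $\Rep(m, 2)$ and $X$ is isomorphic to one of the groups in Table~\ref{repetitiongroups}; among those, only $S_m$ is isomorphic to $A_m$ or $S_m$, since for $m \geq 5$ the group $A_m$ is simple and hence fails the index-$2$ condition in the rows involving $\AGaL_d(r)$, $\PSU_3(r)$ and $\PSL_d(r)$, and is not isomorphic to $M_{22} \rtimes Z_2$. So the lemma reduces to proving the socle equality. By Remark~\ref{remfav} I take $\b 0 \in C$. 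Propositions~\ref{ihom} and~\ref{x12trans} give that $\mu(X_{\b 0})$ is $2$-homogeneous on $M$ and $X_1^Q$ is $2$-transitive on $Q$. Since $\mu|_X$ is injective, $X_1$ is identified with $\mu(X)_1$, equal to $A_{m-1}$ or $S_{m-1}$, and $\varphi_1$ becomes a homomorphism from this group into $S_q$ with $2$-transitive image of degree $q$.

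The first step is to show $q = 2$. Assume for contradiction that $q \geq 3$, so $|X_1^Q| \geq q(q-1) \geq 6$. Since $A_{m-1}$ is simple for $m \geq 6$, and since the only nontrivial proper normal subgroup of $S_{m-1}$ is $A_{m-1}$, $\varphi_1$ must be injective, so that $A_{m-1}$ or $S_{m-1}$ embeds as a $2$-transitive subgroup of $S_q$ of degree $q$. I would then invoke Remark~\ref{imprimrank3}, which places the triple $(X, m, q)$ in one of the tables of \cite{devillers2011imprimitive}, and by inspection no entry has $X \cong A_m$ or $S_m$ with $q \geq 3$. Any exceptional triples arising from small-$m$ exotic $2$-transitive actions of $A_{m-1}$ or $S_{m-1}$ are excluded by Lemma~\ref{codeorder} combined with the structure of large subgroups of $A_m$ and $S_m$: $\binom{m}{2}(q-1)^2$ divides $|X_{\b 0}|$ and $|X_{\b 0}|$ divides $|X|$, forcing either $X_{\b 0} = X$ (hence $|C| = 1$, contradicting $|C| \geq 2$) or, when $X = S_m$, $X_{\b 0} = A_m$; the latter gives $\soc(X_{\b 0}) = \soc(X) = A_m$, and Lemma~\ref{equalsocles} then forces $q = 2$, contradicting $q \geq 3$.

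Having $q = 2$, I would classify the entry-faithful embeddings of $X$ via group cohomology. Such an embedding is a $\mu$-injective complement of $\F_2^m$ in $\F_2^m \rtimes \mu(X)$, and these are parametrised by $H^1(\mu(X), \F_2^m)$, which by Shapiro's lemma equals $\mathrm{Hom}(\mu(X)_1, \F_2)$. For $X \cong A_m$ with $m \geq 5$, this $\mathrm{Hom}$-set is trivial: $A_{m-1}$ is perfect for $m \geq 6$, and $A_4/V_4 \cong \mathbb{Z}/3$ has no index-$2$ quotient. Hence $X$ is conjugate to $A_m \leq S_m \leq \Aut(\varGamma)$ acting by pure coordinate permutations, so fixes $\b 0$ and gives $|C| = 1$, a contradiction. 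For $X \cong S_m$, the $\mathrm{Hom}$-set is $\F_2$, generated by the sign, yielding two conjugacy classes of embeddings: the trivial one (again $|C| = 1$) and the ``diagonal-sign'' embedding $\{(\mathrm{sgn}(\sigma), \ldots, \mathrm{sgn}(\sigma))\sigma : \sigma \in S_m\}$. In this latter embedding $X_{\b 0}$ is exactly the subgroup of even permutations, so $X_{\b 0} = A_m$ and $\soc(X_{\b 0}) = A_m = \soc(X)$; moreover the orbit of $\b 0$ is $\{\b 0, \b 1\} = \Rep(m, 2)$. Lemma~\ref{equalsocles} now finishes the proof.

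The main obstacle is the elimination of $q \geq 3$ in the first step, which relies on inspection of the classification tables of \cite{devillers2011imprimitive}. A more self-contained alternative would be to analyse the natural imprimitive embedding of $A_m$ or $S_m$ into $S_{m-1} \wr S_m$, arising from the action on ordered pairs of distinct elements of $M$, and verify directly that no orbit on the vertex set has stabiliser $2$-homogeneous on $M$: each such stabiliser is the $S_m$-centraliser of a fixed-point-free self-map of $M$, and one checks that such centralisers always fail $2$-homogeneity on $M$ for $m \geq 5$.
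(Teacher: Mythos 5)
The decisive gap is in your first step, the elimination of $q\geq 3$. Your assertion that ``by inspection no entry of the tables of \cite{devillers2011imprimitive} has $X\cong A_m$ or $S_m$ with $q\geq 3$'' is simply false: those tables contain the infinite family $q=m-1$ for both $A_m$ and $S_m$ (coming from the action on ordered pairs of distinct entries), together with the sporadic parameters $(m,q)=(5,3),(6,6),(7,10),(8,15),(9,15)$ --- this is precisely the content of Table~\ref{socleAm} of the paper. Your fallback, Lemma~\ref{codeorder} ``combined with the structure of large subgroups'', does not dispose of the family $q=m-1$: there $\binom{m}{2}(q-1)^2=m(m-1)(m-2)^2/2$, which does divide $|S_m|$ (and frequently $|A_m|$), and nothing in what you write forces $X_{\b 0}$ to contain $A_m$; a priori $X_{\b 0}$ could be any $2$-homogeneous subgroup of the right order. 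This family needs a genuine argument: the paper first rules out $X_{\b 0}\leq\AGaL_1(m)$ to get $X_{\b 0}$ $2$-transitive, then (for $m\neq 7$) uses the uniqueness up to permutational isomorphism of the transitive degree-$(m-1)$ representation of $A_{m-1}$ and $S_{m-1}$ to identify $X_{1,x}$ ($x\in Q\setminus\{0\}$) with $X_{1,i}$ for some $i\in M$, and obtains a contradiction by computing the index of $X_{\b 0,1,x}$ in $X_{\b 0}$ in two ways ($m(m-2)$ versus $m(m-1)$); the case $m=7$, where $S_6$ has an exotic degree-$6$ action, is killed separately by divisibility. Your closing ``self-contained alternative'' --- that for $q=m-1$ a vertex stabiliser is the centraliser in $S_m$ of a fixed-point-free self-map of $M$, and such centralisers are never $2$-homogeneous --- is in fact a viable route (a $k$-to-one map with $k>1$ has a proper invariant image, and the centraliser of a fixed-point-free permutation is imprimitive or cyclic), but as written it is an unverified claim, it presupposes that the embedding of $X$ into $\Aut(\varGamma)$ is the natural ``ordered pairs'' one, and it again overlooks $m=7$. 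Note also that the sporadic case $X\cong S_5$, $m=5$, $q=3$ survives the raw divisibility test of Lemma~\ref{codeorder} and needs the extra observation that $S_5$ has no subgroup of index~$3$.

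Your second step, by contrast, is correct and genuinely different from the paper. Computing $H^1(S_m,\F_2^m)\cong \mathrm{Hom}(S_{m-1},\F_2)\cong\F_2$ (and $H^1(A_m,\F_2^m)=0$) via Shapiro's lemma pins down the entry-faithful embeddings exactly, shows directly that the only viable one is the diagonal-sign copy of $S_m$ with $X_{\b 0}=A_m$ and $\b 0^X=\Rep(m,2)$, and so feeds straight into Lemma~\ref{equalsocles}. The paper instead splits the $q=2$ case into $X_{\b 0}=A_m$ and $X_{\b 0}$ primitive not containing $A_m$, excluding the latter by playing Bochert's lower bound $\lfloor(m+1)/2\rfloor!$ for the index of a primitive subgroup against the Singleton bound $|C|\leq 2^{m-4}$. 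Your cohomological argument is arguably cleaner for this part, but it does not repair the first step, which is where the real work of the lemma lies.
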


\begin{table}
\begin{center}
\begin{tabular}{cccccc}
\hline\noalign{\smallskip}
 $X$ & $m$ & $q$ & $|X|$ & $\binom{m}{2} (q-1)^2$ & Lemma~\ref{codeorder} \\
 \noalign{\smallskip}\hline\noalign{\smallskip}
 $A_m,S_m$ & $m$ & $m-1$ & & & \\
 $S_m$ & $m$ & $2$ & & & \\
 $S_{5}$ & $5$ & $3$ & $2^3.3.5$ & $2^3.5$ & \\
 $A_{6}$ & $6$ & $6$ & $2^3.3^2.5$ & $3.\b{5^3}$ & fail \\
 $S_{6}$ & $6$ & $6$ & $2^4.3^2.5$ & $3.\b{5^3}$ & fail \\
 $A_{7}$ & $7$ & $10$ & $2^3.3^2.5.7$ & $\b{3^5}.7$ & fail \\
 $S_{7}$ & $7$ & $10$ & $2^4.3^2.5.7$ & $\b{3^5}.7$ & fail \\
 $A_{8}$ & $8$ & $15$ & $2^6.3^2.5.7$ & $2^4.\b{7^3}$ & fail \\
 $A_{9}$ & $9$ & $15$ & $2^6.3^4.5.7$ & $2^4.3^2.\b{7^2}$ & fail \\
 \noalign{\smallskip}\hline
\end{tabular}
\caption{Candidate entry-faithful groups $X$, with $soc(X)=A_m$, such that $C$ is an $(X,2)$-neighbour transitive code in $H(m,q)$.}\label{socleAm}
\end{center}
\end{table}

\begin{proof}
 By Remark~\ref{imprimrank3} the values of $X$, $m$, and $q$ appear as $G$, $n$, and $|B|$ respectively, in \cite[Table 2]{devillers2011imprimitive}. Such groups, with the additional requirement that $\soc(X)=A_m$, are compiled in Table~\ref{socleAm}. Those cases which fail to satisfy Lemma~\ref{codeorder} are marked as such. Thus, we are left with the cases $q=m-1$; $X\cong S_m$ and $q=2$; and $X \cong S_5$, $m=5$ and $q=3$. Note that $m\geq 5$, since $\delta\geq 5$, and $X_{\b 0}$ is $2$-homogeneous on $M$, by Proposition~\ref{ihom}. 

 First, let $m=5$, $q=3$ and $X\cong S_5$. Table~\ref{socleAm} and Lemma~\ref{codeorder} imply that $X_{\b 0}$ is isomorphic to an index $3$ subgroup of $S_5$, which does not exist.
 
 Now, let $X\cong S_m$ and $q=2$. If $X_{\b 0}\cong A_m$ then, by Lemma~\ref{equalsocles}, $C$ is the binary repetition code, by Lemma~\ref{equalsocles}. Suppose $X_{\b 0}\neq A_m$. Now, $X_{\b0}$ is $2$-homogeneous, and hence primitive. Thus we can then apply a result from \cite{bochert1889ueber}, which says that the index of a primitive group, not containing $A_m$, in $S_m$ is at least $\lfloor(m+1)/2\rfloor!$. Now, $|C|=|X:X_{\b 0}|\geq \lfloor(m+1)/2\rfloor!$. By the Singleton bound $|C|\leq 2^{m-\delta+1}\leq 2^{m-4}$. Hence $m$ must satisfy $2^{m-4}\geq \lfloor(m+1)/2\rfloor!$, which does not hold for $m\geq 5$.
 
 Suppose $X\cong A_m$ or $S_m$ and $q=m-1$. If $X_{\b 0}$ is $2$-homogeneous, but not $2$-transitive, then $X_{\b 0}\leq \AGaL_1(m)$ and $m\equiv 3\pmod{4}$, by \cite{kantor1972k}. So $m$ is a prime power, that is $m=r^t$, and $|X_{\b 0}|$ divides $tm(m-1)/2$. However, by Lemma~\ref{codeorder}, $\binom{m}{2}(q-1)^2=m(m-1)(m-2)^2/2$ must divide $|X_{\b 0}|$, that is, $(m-2)^2$ must divide $t$. This does not occur for $m\geq 5$, so $X_{\b 0}$ is $2$-transitive.  
 
 Suppose $m=7$. We have that $|X|$ divides $7!$. However, $(q-1)^2\binom{m}{2}=3.5^2.7$ must divide $|X|$, by Lemma~\ref{codeorder}, which does not hold. 
 
 Suppose $m\neq 7$. If $X=A_m$ or $S_m$, then the stabiliser $X_1$ of $1\in M$ is $A_{m-1}$ or $S_{m-1}$, respectively. Now $X_1$ is transitive of degree $m-1$ on $Q$ in the first entry, by Proposition~\ref{x12trans}, and transitive of degree $m-1$ on $M\setminus\{1\}$, since $X$ is $2$-transitive on $M$, by Corollary~\ref{2transM}. Let $x\in Q\setminus \{0\}$ . Since $A_{m-1}$ and $S_{m-1}$ only have one transitive representation of degree $m-1$, up to permutational isomorphism, it follows that $X_{1,x}=X_{1,i}$, for some $i\in M$.
 
 Now, consider the stabiliser $X_{{\b 0},1}$ of the zero codeword and the entry $1\in M$. Since $X_{\b 0}$ is transitive on the neighbours of $\b 0$, it follows that $X_{{\b 0},1}$ is transitive on the vertices of the form $\nu=(y,0^{m-1})$ in $\varGamma_1(\b 0)$, where $y\in Q\setminus \{0\}$. Hence, $X_{\b 0,1}$ is transitive on $Q\setminus \{0\}$. Thus, for $x\in Q\setminus \{0\}$ the stabiliser $X_{\b 0,1,x}$, has index $m-2$ in $X_{\b 0,1}$. Note that $X_{{\b 0},1,x}=(X_{1,x})_{\b 0}=X_{1,i,\b 0}$, and we have just proved that this subgroup has index $m(m-2)$ in $X_{\b 0}$. Since $X_{\b 0}$ is $2$-transitive on $M$, it follows that $X_{\b 0,1,i}$ has index $m(m-1)$ in $X_{\b 0}$, which is a contradiction. 
\end{proof}

\section{Distinct Socles for $X,X_{\b 0}$}\label{nontrivsec}

We begin with an example of an entry-faithful $(X,2)$-neighbour transitive code, where $\soc(X)\neq \soc(X_{\b 0})$.

\begin{definition}\label{hadamarddef}
 Let $\mathcal{P}$ be the punctured Hadamard $12$ code, obtained as follows (see \cite[Part 1, Section 2.3]{macwilliams1978theory}). First, we construct a normalised Hadamard matrix $H_{12}$ of order $12$ using the Paley construction. Let $M=\F_{11}\cup \{*\}$ and let $H_{12}$ be the $12\times 12$ matrix with first row $v$, where $v_a=-1$ if $a$ is a square in $\F_{11}$ (including $0$), and $v_a=1$ if $a$ is a non-square or if $a=*$, taking the orbit of $v$ under the additive group of $\F_{11}$ acting on $M$ to form $10$ more rows and adding a final row, the vector $((-1)^{12})$. The Hadamard code $\H$ of length $12$ in $H(12,2)$ then consists of the vertices $\alpha$ such that, for $u$ a row in $H_{12}$ or $-H_{12}$, $\alpha_a=0$ if $v_a=1$ and $\alpha_a=1$ if $v_a=-1$. The punctured code $\P$ of $\H$ is obtained by deleting the coordinate $*$ from $M$. The weight $6$ codewords of $\P$ form a binary $2$-$(11,6,3)$ design, which we denote throughout by $\D$. The code $\P$ consists of the following codewords: the all zero vector, the all ones vector, the characteristic vectors of the $2$-$(11,6,3)$ design $\D$, and the characteristic vectors of the complement of that design, which forms a $2$-$(11,5,2)$ design. The even weight subcode $\mathcal E$ of $\mathcal P$ is the code consisting of the all zero vector and the $2$-$(11,6,3)$ design. Both $\D$ and its complement are unique up to isomorphism \cite{todd1933}.
\end{definition}

\begin{lemma}\label{punchadamard}
 Let $\mathcal E$ be the even weight subcode of the punctured Hadamard $12$ code, defined above. Then $\mathcal E$ is entry-faithful $(X,2)$-neighbour transitive, if and only if $X=\Aut(\mathcal E)\cong M_{11}$. Moreover $\Aut(\E)$ acts faithfully on entries. Furthermore, $\E$ is not $(X,3)$-neighbour transitive for any group $X$, in particular, $\E$ is not completely transitive.
\end{lemma}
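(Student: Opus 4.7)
The plan is to pin down $\Aut(\E)$, verify the entry-faithful $(\Aut(\E),2)$-neighbour transitivity directly, rule out proper subgroups of $\Aut(\E)$ as candidates for $X$, and defeat $(X,3)$-neighbour transitivity by showing $\E$ fails to be $3$-regular. The main obstacle is the identification of $\Aut(\E)\cong M_{11}$.

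First I identify $\Aut(\E)$. Since $g(\b 0)=\b 0$ forces the complementation pattern of $g$ to vanish, the stabiliser $\Aut(\E)_{\b 0}$ consists of pure permutations preserving the block set of the $2$-$(11,6,3)$ design $\D$, so $\Aut(\E)_{\b 0}=\Aut(\D)$. The Paley construction gives $\PSL_2(11)\leq\Aut(\D)$, and by the classification of $2$-transitive groups of degree $11$ the only overgroups of $\PSL_2(11)$ in $S_{11}$ are $\PSL_2(11)$, $M_{11}$, $A_{11}$, $S_{11}$; the last two cannot preserve an $11$-block design among the $\binom{11}{6}=462$ six-subsets, and $M_{11}$ in its $4$-transitive action would force $\D$ to be a $4$-$(11,6,\lambda)$ design, impossible since the block count $22\lambda$ cannot equal $11$. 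Hence $\Aut(\D)=\PSL_2(11)$, so $|\Aut(\E)|\leq 12\cdot 660=7920$ by orbit-stabiliser. For the reverse bound I invoke the classical $12$-point $2$-transitive action of $M_{11}$ with point stabiliser $\PSL_2(11)$: the $11$ non-trivial cosets of $\PSL_2(11)$ in $M_{11}$ correspond to automorphisms of $\E$ of the form (a permutation of $M$) composed with complementation by $\chi_B$ for $B\in\D$, each moving $\b 0$ to $\chi_B$. Thus $|\Aut(\E)|=7920$ and $\Aut(\E)\cong M_{11}$ acting on $\E$ via its $12$-point action.

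For entry faithfulness, a pure complementation by $\chi_S\in\E$ preserving $\E$ would make $\E$ closed under $c\mapsto c+\chi_S$ and hence a union of $\langle\chi_S\rangle$-cosets in $\mathbb F_2^{11}$; since $|\E|=12$ is not a power of two, only $S=\emptyset$ is possible. For $(\Aut(\E),2)$-neighbour transitivity, note that $\delta=6\geq 5$ gives $\E_i=\bigsqcup_{c\in\E}\varGamma_i(c)$ for $i=1,2$; transitivity of $\Aut(\E)$ on $\E$ together with $\Aut(\E)_{\b 0}=\PSL_2(11)$ acting $2$-transitively on the $11$ entries (hence transitively on $\varGamma_1(\b 0)$ and $\varGamma_2(\b 0)$) yields transitivity of $\Aut(\E)$ on $\E_1$ and $\E_2$. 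For the converse, suppose $\E$ is entry-faithful $(X,2)$-neighbour transitive. Then $X\leq\Aut(\E)=M_{11}$, $X$ is transitive on $\E$ and on $M$ (Corollary~\ref{2transM}), and $X_{\b 0}$ is transitive on $\varGamma_2(\b 0)$ (of size $55$) by Proposition~\ref{ihom}; hence $|X|\geq 12\cdot 55=660$. The only subgroups of $M_{11}$ of order at least $660$ are $\PSL_2(11)$ (which fixes $\b 0\in\E$ in the $12$-point action, failing transitivity on $\E$), $M_{10}$ (of order $720$, not divisible by $11$, failing transitivity on $M$), and $M_{11}$ itself; so $X=M_{11}$.

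For the final assertion it suffices, by Lemma~\ref{design}, to show $\E$ is not $3$-regular. Every weight-$3$ vertex $v=\chi_S$ lies in $\E_3$ (distance $3$ to $\b 0$, and distances $9-2j\geq 3$ to every $\chi_B$), and the codewords of $\E$ at distance exactly $3$ from $v$ are $\b 0$ together with the blocks $B\in\D$ containing $S$. If three blocks $B_1,B_2,B_3$ shared a common $3$-subset, inclusion-exclusion would give $|B_1\cup B_2\cup B_3|=18-9+3=12$, exceeding $|M|=11$; so no $3$-subset lies in three or more blocks. Double-counting with $\sum_S j_S=11\binom{6}{3}=220$ and $\sum_S \binom{j_S}{2}=\binom{11}{2}=55$ (each pair of blocks meets in exactly three points, yielding $55$ distinct intersection $3$-subsets) then gives exactly $55$ three-subsets in two blocks and $110$ in one block. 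Hence $|\varGamma_3(v)\cap\E|\in\{2,3\}$ depending on $v$, violating $3$-regularity. The existence of weight-$3$ vertices in $\E_3$ further gives covering radius $\rho\geq 3$, so $\E$ is not completely transitive either.
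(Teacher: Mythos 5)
Your overall architecture (upper bound via $\Aut(\E)_{\b 0}=\Aut(\D)$, lower bound via $M_{11}$, direct verification of $2$-neighbour transitivity, subgroup elimination, and failure of $3$-regularity) parallels the paper's, and several pieces are correct --- in particular your proof that no $3$-subset lies in three blocks of $\D$ and the resulting count showing $|\varGamma_3(\nu)\cap\E|\in\{2,3\}$ is a valid, self-contained alternative to the paper's argument that a $3$-$(11,6,\lambda)$ design would force $\lambda=4/3$. However, there are two genuine gaps. The essential one is the lower bound $M_{11}\leq\Aut(\E)$: you assert that the eleven non-trivial cosets of $\PSL_2(11)$ in the abstract $12$-point action of $M_{11}$ ``correspond to'' Hamming-graph automorphisms of the form (permutation)$\circ$(complementation on a block), but you give no construction and no citation. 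The existence of even one element of $\Aut(\varGamma)$ carrying $\b 0$ to a weight-$6$ codeword while preserving $\E$ is precisely the non-trivial content here; if no such element existed, $\Aut(\E)$ would equal $\PSL_2(11)$ fixing $\b 0$ and the ``if'' direction of the lemma would be false. The paper obtains this from the cited fact that $\Aut(\H)\cong 2\cdot M_{12}$ with $\mu(\Aut(\H))=M_{12}$, then punctures and intersects with the stabiliser $Y$ of the even-weight vertices to extract a transitive $M_{11}\leq\Aut(\E)$. You need either that citation or an explicit verification.

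The second gap is your entry-faithfulness argument. A union of cosets of the order-$2$ group $\langle\chi_S\rangle$ ($S\neq\emptyset$) merely has even cardinality, and $|\E|=12$ \emph{is} even, so ``$12$ is not a power of two'' rules out nothing; indeed translation by a block $\chi_B$ induces a fixed-point-free involution on any set it preserves, which is perfectly consistent with size $12$. One must instead either check directly that $\chi_B+\chi_{B'}$ is not in $\E$ for some pair of blocks (true, but requires a computation with the Paley design), or argue as the paper does: once $\Aut(\E)\cong M_{11}$ is known, the kernel of the action on entries is a normal subgroup of a simple group with non-trivial image (it contains $\PSL_2(11)$), hence trivial. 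Finally, a minor point: your elimination of an order-$660$ subgroup $X$ tacitly assumes every $\PSL_2(11)$ in $M_{11}$ is a point stabiliser of the $12$-point action, i.e.\ that $M_{11}$ has a unique conjugacy class of such subgroups; the paper cites the Atlas for exactly this.
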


\begin{proof}
 By \cite{Gillespie20131394} the Hadamard $12$ code $\mathcal{H}$ has automorphism group the non-split extension $2 M_{12}$ and $\mu(\Aut(\mathcal H))=M_{12}$ is $5$-transitive on the entries of $H(12,2)$ \cite{hallmathieu12}. Since $\mathcal P$ is obtained by deleting an entry from $\mathcal H$, and since the Schur multiplier of $M_{11}$ is trivial, we have that $\Aut(\mathcal P)= H\times M_{11}\cong 2 \times M_{11}$, where $H=\langle ((01),\ldots,(01))\rangle$. Let $Y$ be the stabiliser in $\Aut(\varGamma)$ of the set of even weight vertices of $H(11,2)$, noting that $|\Aut(\varGamma):Y|=2$. Now, either $\Aut(\P)\leq Y$ or $Y\cap \Aut(\P)$ is an index $2$ subgroup of $\Aut(\P)$. Since $\P$ contains the weight $11$ codeword $(1^{11})$ and $\Aut(\P)$ is transitive on $\P$, it follows that $Y\cap \Aut(\P)$ must be an index $2$ subgroup of $\Aut(\P)$. As $M_{11}$ is the unique index $2$ subgroup of $\Aut(\P)$ we have $M_{11}=Y\cap\Aut(\P)$. Since $\Aut(\P)$ is transitive on $\P$, $M_{11}$ has two orbits on $\P$; the even weight codewords $\E$ and the odd weight codewords $\P\setminus\E$. Hence $M_{11}\leq \Aut(\E)$ and $M_{11}$ is transitive on $\E$. By \cite{todd1933}, $\PSL_2(11)$ is the automorphism group of the weight $6$ codewords forming the $2$-$(11,6,3)$ design $\D$. Thus $\Aut(\E)_{\b 0}\leq \PSL_2(11)$. Now, $|\Aut(\E)|=12.|\Aut(\E)_{\b 0}|\leq 12.|\PSL_2(11)|$. Hence $\Aut(\E)\cong M_{11}$ and $\Aut(\E)_{\b 0}\cong \PSL_2(11)$. Moreover, $M_{11}$ acts faithfully on entries since it contains no non-trivial normal subgroups and $\Aut(\E)^M$ is non-trivial, since it contains $\PSL_2(11)$.
 
 Now, $M_{11}$ is transitive on $\E$. Moreover, $\Aut(\E)_{\b 0}\cong \PSL_2(11)$ acts $2$-transitively on entries. Thus, it follows that $\Aut(\mathcal E)_{\b 0}$ is transitive on the weight $1$ and weight $2$ vertices of $H(11,2)$. Hence, $\E$ is $(\Aut(\E),2)$-neighbour transitive.
 
 Suppose $X$ is a proper subgroup of $\Aut(\E)$ such that $\E$ is $(X,2)$-neighbour transitive. Since $X$ is transitive on $\E$, it follows that $X_{\b 0}$ is a proper subgroup of $\PSL_2(11)$. Then, as $X_{\b 0}$ is transitive on $\varGamma_2(\b 0)$, we have that $X_{\b 0}$ is $2$-homogeneous, and hence primitive on $M$. The only $2$-homogeneous proper subgroup of $\PSL_2(11)$ in its action of $11$ points is $F_{55}$, and thus $X_{\b 0}\cong F_{55}$. Also $X^M$ is $2$-transitive on $11$ points, with order $|C|.|X_{\b 0}|=12.55$, implying $X\cong \PSL_2(11)$. However, $\Aut(\E)$ has a unique conjugacy class of subgroups isomorphic to $\PSL_2(11)$, see \cite[p. 18]{conway1985atlas}, and hence $X$ is conjugate to $\Aut(\E)_{\b 0}$. This implies that $X$ fixes a codeword, which is a contradiction.
 
 Suppose $\E$ is $(X,3)$-neighbour transitive, for some $X\leq \Aut(\E)$. Then, by Lemma~\ref{design}, the weight six codewords of $\E$ would form a $3$-$(11,6,\lambda)$ design, for some integer $\lambda$. The equation $$b=\frac{v(v-1)(v-2)}{k(k-1)(k-2)}\lambda$$ gives $\lambda=11.6.5.4/(11.10.9)=4/3$, a contradiction. Since $\delta\geq 6$, it follows that $\rho\geq 3$ and hence $\E$ is not completely transitive. 
\end{proof}

\begin{proposition}\label{equivpunchad}
 Let $C$ be a $2$-regular code in $H(11,2)$ with $\delta\geq 5$ and $|C|\geq 2$. Then one of the following holds: 
 \begin{enumerate}
 \item $\delta=11$ and $C$ is equivalent to the binary repetition code,
 \item $\delta=5$ and $C$ is equivalent to $\P$, or 
 \item $\delta=6$ and $C$ is equivalent to $\mathcal{E}$.
 \end{enumerate}
\end{proposition}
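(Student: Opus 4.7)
\medskip

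\noindent\textbf{Proof proposal.} The plan is a case analysis on $\delta \in \{5,6,\ldots,11\}$, driven by the $2$-design structure supplied by Lemma~\ref{design}. Since equivalence preserves both minimum distance and $2$-regularity, I may first replace $C$ by an equivalent code containing $\b{0}$. Lemma~\ref{design} then ensures that, for each $k$ with $\delta\leq k\leq 11$, the set of codewords of weight $k$ forms a binary $2$-$(11,k,\lambda_k)$ design with $b_k=110\lambda_k/(k(k-1))$ blocks and replication number $r_k=10\lambda_k/(k-1)$; the number $a_k$ of such codewords equals $b_k$.

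The next step is to bound each $\lambda_k$. Integrality of $b_k$ and $r_k$ forces $\lambda_k$ into an arithmetic progression depending on $k$; for example $\lambda_5\in 2\mathbb{Z}$, $\lambda_6\in 3\mathbb{Z}$, $\lambda_7\in 21\mathbb{Z}$, $\lambda_8\in 28\mathbb{Z}$, $\lambda_9\in 36\mathbb{Z}$, $\lambda_{10}\in 9\mathbb{Z}$. Any two blocks $B\neq B'$ of a weight-$k$ design satisfy $2k-2|B\cap B'|\geq\delta$, and since the left side is even this gives $|B\cap B'|\leq k-\lceil\delta/2\rceil$. Averaging the identity $\sum_{B'\neq B}|B\cap B'|=k(r_k-1)$ over the $b_k-1$ other blocks then produces an upper bound on $\lambda_k$ which, combined with the divisibility constraints, rules out all nonzero $\lambda_k$ for $7\leq k\leq 10$ as soon as $\delta\geq 5$, and restricts the remaining cases to $\lambda_5\in\{0,2\}$ and $\lambda_6\in\{0,3\}$. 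Each nonzero value yields a design unique up to isomorphism by~\cite{todd1933}.

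The cases then resolve as follows. If $\delta=11$, Lemma~\ref{sizeofcode} gives that $C$ is equivalent to $\Rep(11,2)$. If $7\leq\delta\leq 10$, all $\lambda_k=0$ for $k\leq 10$, whence $|C|\leq 1+a_{11}\leq 2$; combined with $|C|\geq 2$ this forces $C=\{\b{0},(1^{11})\}$ and $\delta=11$, a contradiction. If $\delta=6$ then a weight-$11$ codeword would lie at distance $5$ from every weight-$6$ codeword, so $a_{11}=0$; hence $C$ consists of $\b{0}$ together with the unique $2$-$(11,6,3)$ design, showing $C$ is equivalent to $\E$. The final case $\delta=5$ leaves $(a_5,a_6,a_{11})\in\{0,11\}\times\{0,11\}\times\{0,1\}$.

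The main obstacle is disposing of the spurious combinations for $\delta=5$; for this I would invoke the fact that $1$-regularity (implied by $2$-regularity) makes $|\varGamma_k(c)\cap C|$ independent of $c\in C$, so the distance distribution from any codeword equals that from $\b{0}$. Computing the distribution from a weight-$5$ codeword $c$, and using that the complementation $c\leftrightarrow c+(1^{11})$ pairs the $2$-$(11,5,2)$ blocks bijectively with the $2$-$(11,6,3)$ blocks within $\P$, I would show that the combinations $(11,0,0)$, $(11,0,1)$, $(0,11,1)$ and $(11,11,0)$ each produce a mismatch and must be discarded, while $(11,11,1)$ passes the test and identifies $C$ with $\P$.
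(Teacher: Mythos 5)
Your proposal is correct in substance and reaches the same endpoint via the same design-theoretic skeleton, but it replaces two of the paper's key tools with more elementary ones. Where the paper pins down the block numbers by importing the bound $|C|\leq 24$ from Best et al.'s table (forcing $b\in\{11,22\}$ and hence $\delta\in\{5,6,10\}$), you instead bound each $\lambda_k$ directly by combining the divisibility of $b_k$ and $r_k$ with the averaging of $\sum_{B'\neq B}|B\cap B'|=k(r_k-1)$ against the packing bound $|B\cap B'|\leq k-\lceil\delta/2\rceil$; I checked the arithmetic and this does kill all nonzero $\lambda_k$ for $7\leq k\leq 10$ and leaves only $\lambda_5\in\{0,2\}$, $\lambda_6\in\{0,3\}$, so your route is self-contained where the paper's relies on an external table. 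Second, where the paper excludes the distance distribution $(1,0,0,0,0,11,11,0,0,0,0,0)$ via the MacWilliams transform ($a_2'<0$), you exclude $(11,11,0)$ and the other spurious profiles by comparing the distance distribution seen from a weight-$5$ codeword with that seen from $\b 0$, using $0$-regularity; this also works (from a weight-$5$ block $B$ of $\P\setminus\{(1^{11})\}$ one gets $a_{11}=1\neq 0$, for instance). The one step you should make explicit is the endgame for the profile $(11,11,1)$: knowing that the weight-$5$ and weight-$6$ codewords form designs each unique up to isomorphism does not yet fix their relative position, so ``passes the test'' does not by itself identify $C$ with $\P$. The fix is exactly the regularity observation the paper records: since $|\varGamma_{11}(\b 0)\cap C|=1$, regularity gives $|\varGamma_{11}(B)\cap C|=1$ for every weight-$5$ codeword $B$, and the unique vertex at distance $11$ from $B$ is its complement $\overline{B}$, which therefore lies in $C$; counting then shows the weight-$6$ design is precisely the complementary design of the weight-$5$ one, whence $C=\P$. (Also note that in the profile $(0,11,1)$ there is no weight-$5$ codeword to base the comparison on, so there you should compare from $(1^{11})$ or a weight-$6$ codeword instead.) With those two small points spelled out, your argument is complete.
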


\begin{proof}
 Suppose $C$ is a $2$-regular code. By Remark~\ref{remfav} we can assume $\b 0\in C$.  Weight $\delta$ codewords must exist, and form a $2$-$(11,\delta,\lambda)$ design, by Lemma~\ref{design}. If $\delta=11$ we get the trivial design with one block, that is $|C|=2$ and $C$ is equivalent to the binary repetition code. Suppose $\delta<11$. Using the equation $vr=bk$, we have $11r=b\delta$. So, since $\delta< 11$, $11$ divides $b$. Now, Table~1 in \cite{Best78boundsfor} gives $|C|\leq 24$, so that $b=11$ or $22$. The equation $$b=\frac{v(v-1)}{k(k-1)}\lambda=\frac{11.10}{\delta(\delta-1)}\lambda$$ then gives $a\delta(\delta-1)=10\lambda$, where $a=1$ or $2$, so that $\delta=5,6$ or $10$. If $\delta=10$ then $a=1$, $\lambda=9$ and the design consists of every weight $10$ vector. However, $d((1^{10},0),(0,1^{10}))=2$, which contradicts $\delta=10$. 
 
 Suppose $\delta=5$. Then the weight $5$ codewords of $C$ form a $2$-$(11,5,\lambda)$ design, and since $a.5.4=10\lambda$, we see $\lambda=2$ or $4$. Consider distinct weight $5$ codewords $\alpha,\beta$ which share $2$ non-zero entries. Without loss of generality, $\alpha=(1^5,0^6)$ and $\beta=(1^i,0^{5-i},1^{5-i},0^{i+1})$, for some $i\in\{2,3,4\}$. Now, $d(\alpha,\beta)=10-2i\geq\delta=5$, so $i=2$ and $d(\alpha,\beta)=6$. Since the code is $2$-regular, and in particular $0$-regular, this implies there are weight $6$ vertices in the code. Hence the weight $6$ codewords form a $2$-$(11,6,\lambda')$ design, with the equation $vr=bk$ implying $11$ divides $b$. The bound $|C|\leq 24$ then implies that $b=11$ in both designs. Hence $\lambda'=3$ and $\lambda=2$ and the designs are equivalent to $\D$ and its complementary design respectively (see Definition~\ref{hadamarddef}). As we have just seen, it is impossible to form a design from weight $k$ vertices for $k\neq 0,5,6,11$. Hence the distance distribution of $C$ is $(1,0,0,0,0,11,11,0,0,0,0,b)$ where $b=0$ or $1$. Taking the MacWilliams transform when $b=0$ gives $a_2'<0$ which contradicts \cite[Lemma~5.3.3]{van1999introduction}, and thus $b=1$. Regularity then implies the designs are indeed the complementary designs of each other, not just equivalent to these, hence $C=\P$. 
 
 Suppose $\delta=6$. By a similar argument to above we see that the weight $6$ codewords form a $2$-$(11,6,\lambda)$ design. Table~1 in \cite{Best78boundsfor} gives $|C|\leq 12$, and so we see that $b=11$ and $\lambda=3$. Thus $C$ consists of the zero codeword and the characteristic vectors of the blocks of $\D$. 
\end{proof}

%
%

Given the results of the previous section we are left with the possibility that $X$ and $X_{\b 0}$ have different socles. The next proposition gives a classification of the inclusions of $2$-homogeneous groups in $2$-transitive groups with different socles.

\begin{proposition}\label{2tr2hominc}
 Let $G,H$ be groups such that $G < H\leq S_m$, $H$ is $2$-transitive of degree $m$ with $\soc(H)\neq A_m$, and $G$ is $2$-homogeneous of degree $m$ with $\soc(G)\neq \soc(H)$. Then $G$, $H$, and $m$ are as in Table~\ref{2tr2hom}.
\end{proposition}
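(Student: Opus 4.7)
The plan is to split the argument into two cases according to whether the $2$-homogeneous group $G$ is also $2$-transitive, and in each case to read off the possibilities from an existing CFSG-based classification.

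In the first case $G$ is $2$-transitive, so $G<H\leq S_m$ is an inclusion of two $2$-transitive groups of the same degree with $\soc(G)\neq\soc(H)$ and $\soc(H)\neq A_m$. This is precisely the setting covered by the Liebeck--Praeger--Saxl classification of inclusions of finite $2$-transitive groups as used in \cite{praeger1990inclusion}. I would reproduce the tables there, and restrict them to the rows compatible with our hypotheses (discarding any line with $\soc(H)=A_m$ or with $\soc(G)=\soc(H)$, the latter corresponding to almost simple inclusions with a common socle). The surviving rows give the bulk of Table~\ref{2tr2hom}.

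In the second case $G$ is $2$-homogeneous but not $2$-transitive. By Kantor's theorem \cite{kantor1972k}, $G\leq\AGaL_1(m)$ and $m\equiv 3\pmod 4$; in particular $m=p^a$ is a prime power and $\soc(G)$ is the unique elementary abelian regular normal subgroup of $\AGaL_1(m)$. Since $H$ is $2$-transitive of degree $m$ with $\soc(H)\neq A_m$, I would invoke the CFSG list of finite $2$-transitive groups to enumerate the candidates for $H$. If $H$ is affine then $\soc(H)$ is the unique minimal normal subgroup of $H$ and is elementary abelian of order $m$; a short argument, comparing $\soc(G)$ and $\soc(H)$ as regular abelian subgroups of the affine group $H$ and using that $\soc(H)$ is normal, forces $\soc(G)=\soc(H)$, contradicting our hypothesis. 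Hence $H$ must be almost simple, and for each of the listed almost simple $2$-transitive degrees $m=p^a\equiv 3\pmod 4$ I would check which contain a $2$-homogeneous subgroup of $\AGaL_1(m)$, producing the remaining entries of Table~\ref{2tr2hom}.

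The main obstacle is a careful case-by-case check against the tables of \cite{praeger1990inclusion}, together with the affine subcase in Case~2: the principal technical point there is to argue that $\soc(G)$, although not obviously normal in $H$, must coincide with $\soc(H)$ whenever $H$ is affine, since both are elementary abelian regular subgroups of order $m$ and $\soc(H)$ is characteristic in $H$. The remaining work is essentially bookkeeping, reading off admissible $(G,H,m)$ from the CFSG-based lists and discarding those violating the socle conditions.
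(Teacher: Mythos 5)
Your overall strategy---reduce to the CFSG-based classification of inclusions of primitive groups and read the answer off published tables---is essentially the paper's. The paper organises the case analysis by whether $G$, and then $H$, is affine or almost simple, and quotes parts (I) and (II) of the main theorem of Liebeck--Praeger--Saxl \cite{Liebeck1987365} together with their Tables I--VI; you instead split according to whether $G$ is $2$-transitive or merely $2$-homogeneous, invoking Kantor \cite{kantor1972k} in the latter case and the classification of inclusions of $2$-transitive groups (via \cite{praeger1990inclusion}) in the former. The two decompositions cover the same ground: your Case~2 yields lines 1--3 of Table~\ref{2tr2hom} and your Case~1 yields lines 4--8. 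Your use of Kantor's theorem even has the small advantage of automatically discarding the candidate $Z_{17}.Z_8<\PGaL_2(16)$ arising from Table~I of \cite{Liebeck1987365}, which the paper has to exclude by hand by noting that this group is not $2$-homogeneous.

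There is, however, one step whose stated justification does not hold up: in the affine subcase of your Case~2 you assert $\soc(G)=\soc(H)$ ``since both are elementary abelian regular subgroups of order $m$ and $\soc(H)$ is characteristic in $H$.'' That is not a proof: an affine primitive group can contain regular elementary abelian subgroups other than its translation subgroup (for odd $p$, already $\AGL_2(p)$ does), so equality of orders together with normality of $\soc(H)$ forces nothing. The argument you want uses primitivity of $G$: with $N=\soc(H)\trianglelefteq H$ one has $N\cap G\trianglelefteq G$, hence $N\cap G$ is trivial or transitive; if transitive then $N\cap G=N$ by regularity of $N$, so $N$ is a regular abelian normal subgroup of the primitive group $G$ and therefore $N=\soc(G)$. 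But the possibility $N\cap G=1$, in which $G$ embeds into a point stabiliser of $H$, still has to be ruled out --- for $m$ prime this is immediate from $m\mid|G|$, but for $m=p^a$ with $a>1$ it needs a genuine argument (or an appeal to part (I) of \cite{Liebeck1987365}, which is precisely how the paper sidesteps the whole issue). Once this step is supplied, your proof is complete and reaches the same table.
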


\begin{table}
\begin{center}
\begin{tabular}{cccc}
\hline\noalign{\smallskip}
 $G$ & $H$ & $m$ & Table of \cite{Liebeck1987365}\\
 \noalign{\smallskip}\hline\noalign{\smallskip}
 $Z_7.Z_3$ & $\PSL_3(2)$ & $7$ & I\\
 $Z_{11}.Z_5$ & $\PSL_2(11)$ or $M_{11}$ & $11$ & I\\
 $Z_{23}.Z_{11}$ & $M_{23}$ & $23$ & I\\
 $\PSL_2(7)$ & $\AGL_3(2)$ & $8$ & II\\
 $A_7$ & $A_8$ & $15$ & III\\
 $\PSL_2(11)$ & $M_{11}$ & $11$ & IV\\
 $\PSL_2(11)$ or $M_{11}$ & $M_{12}$ & $12$ & IV\\
 $\PSL_2(23)$ & $M_{24}$ & $24$ & IV\\
 \noalign{\smallskip}\hline
\end{tabular}
\caption{Groups $G<H\leq S_m$ where $H$ is $2$-transitive, $G$ is $2$-homogeneous, $\soc(H)\neq A_m$ and $\soc(G)\neq \soc(H)$.}\label{2tr2hom}
\end{center}
\end{table}

\begin{proof}
 We use the classification of $2$-transitive groups, \cite[Section 7.7]{dixon1996permutation}, and \cite[Theorem 9.4B]{dixon1996permutation} which gives those groups which are $2$-homogeneous, but not $2$-transitive. Now, $G$ and $H$ are $2$-homogeneous and $2$-transitive, respectively, and thus either almost-simple or affine. Moreover, $G$ is not maximal in $A_m$ or $S_m$, since then $\soc(H)=A_m$. If $G$ is affine, part (I) of the main result from \cite{Liebeck1987365} tells us that $G$ is in \cite[Table I]{Liebeck1987365} of type (c), giving the examples in Table~\ref{2tr2hom}, lines 1--3 (we note that the group $Z_{17}.Z_8$, in $H=\PGaL_2(16)$ is not $2$-homogeneous). Suppose $G$ is almost simple. If $H$ is affine then part (II) of the main result from \cite{Liebeck1987365} tells us that $H$ is of type (c) and listed in \cite[Table~2]{Liebeck1987365}, giving Table~\ref{2tr2hom}, line 4. If $H$ is almost simple, part (II) of the main result from \cite{Liebeck1987365} gives us that $\soc(G)$ and $\soc(H)$ are listed in \cite[Tables III, IV, V, and VI]{Liebeck1987365}, giving  Table~\ref{2tr2hom}, lines 5--8. 
\end{proof}

\begin{table}
\begin{center}
\begin{tabular}{ccccccc}
\hline\noalign{\smallskip}
$X_{\b 0}$ & $X$ & $m$ & $q$ & $|X_{\b 0}|$ & $\binom{m}{2} (q-1)^2$ & Lemma~\ref{codeorder} \\
 \noalign{\smallskip}\hline\noalign{\smallskip}
 $Z_7.Z_3$ & $\PSL_3(2)$ & $7$ & $2$ & $3.7$ & $3.7$ & \\
 $Z_{11}.Z_5$ & $\PSL_2(11)$ & $11$ & $2$ & $5.11$ & $5.11$ & \\
  & $M_{11}$ & $11$ & $2$ & & $5.11$ & \\
 $A_7$ & $A_8$ & $15$ & $7$ & $2^3.3^2.5.7$ & $2^2.\b{3^3} .5.7$ & fail\\
  & & & $8$ & & $3.5.\b{7^3}$ & fail\\
 $\PSL_2(11)$ & $M_{11}$ & $11$ & $2$ & $2^2.3.5.11$ & $5.11$ & \\
  & & & $10$ & & $\b{3^4} .5.11$ & fail \\
  & $M_{12}$ & $12$ & $11$ & & $\b{2^3}.3.\b{5^2} .11$ & fail \\
  & & & $12$ & & $2.3.\b{{11}^3}$ & fail \\
 $M_{11}$ & $M_{12}$ & $12$ & $11$ & $2^4.3^2.5.11$ & $2^3.3.\b{5^2} .11$ & fail \\
  & & & $12$ & & $2.3.\b{{11}^3}$ & fail \\
 $\PSL_2(23)$ & $M_{24}$ & $24$ & $23$ & $2^4.3.11.23$ & $2^4.3.\b{11^2} .23$ & fail \\
 \noalign{\smallskip}\hline
\end{tabular}
\caption{Entry faithful groups $X,X_{\b 0}\leq \Aut(\varGamma)$ such that $X$ is almost simple $2$-transitive on $M$, $X_{\b 0}$ is $2$-homogeneous on $M$, $\soc(X)\neq A_m$ and $\soc(X_{\b 0})\neq \soc(X)$.}
\label{efgroups}
\end{center}
\end{table}

We are now in a position to prove the main results.

\begin{proof}[Proof of Theorem~\ref{main}]
 By Remark~\ref{remfav} we may assume $\b 0\in C$. By Corollary~\ref{2transM} and Proposition~\ref{ihom}, we have that $X$ acts $2$-transitively on $M$ and $X_{\b 0}$ acts $2$-homogeneously on $M$. By Lemma~\ref{equalsocles} if $\soc(X)=\soc(X_{\b 0})$ then $C$ is the binary repetition code and $X$ is one of the groups listed in Table~\ref{repetitiongroups}. Suppose $\soc(X)\neq\soc(X_{\b 0})$. Since $X^M$ is $2$-transitive, $X$ is either affine or almost simple. Suppose $X$ is affine. Then, by Proposition~\ref{2tr2hominc}, $m=8$ and $X\cong \AGL_3(2)$. Hence $X_1\cong \PSL_3(2)\cong\PSL_2(7)$, acting $2$-transitively on $Q$, and so $q=7$ or $8$. Since $\delta\geq 5$, $X_{\b 0}$ must be transitive on $\varGamma_2(\b 0)$. However, $|\varGamma_2(\b 0)|=m(m-1)(q-1)^2/2=4.7.6^2$ or $4.7^3$ respectively, neither of which divides $|X|=8.7.3$. Thus $X$ is almost simple. 
 
 If $\soc(X)=A_m$ then, by Lemma~\ref{amsm}, $C$ is equivalent to the binary repetition code, and so $X=S_m$, $X_{\b 0}=A_m$, contradicting the assumption that $\soc(X)\neq \soc(X_{\b 0})$. Thus $\soc(X)\neq A_m$. Since $X$ is almost-simple and $2$-transitive on $M$, $X_{\b 0}$ is $2$ homogeneous on $M$, with $\soc(X)\neq A_m$ and $\soc(X)\neq \soc(X_{\b 0})$, we can apply Lemma~\ref{2tr2hominc}. Hence, $X,X_{\b 0}$ and $m$ are as in Table~\ref{2tr2hom}, with $X=H$ and $X_{\b 0}=G$. Moreover, by Remark~\ref{imprimrank3}, the values of $X$, $m$, and $q$ appear (as $G$, $n$, and $|B|$ respectively) in one of the lines of \cite[Tables~2 or 3]{devillers2011imprimitive}. The groups which appear both in Table~\ref{2tr2hom}, and also in one of the lines of \cite[Tables~2 or 3]{devillers2011imprimitive}, are listed in Table~\ref{efgroups}, along with the corresponding value of $q$, noting that Lemma~\ref{codeorder} tells us that if $|X_{\b 0}|=m(m-1)/2$ then $q=2$. Furthermore, Lemma~\ref{codeorder} allows us to obtain the last column in Table~\ref{efgroups}, where a group is marked with `fail' if it does not satisfy the first part of this result. Dealing with the remaining cases here will complete the proof. In each case $q=2$.
 
 Suppose $m=7$, $X_{\b 0}=Z_7.Z_3$ and $X=\PSL_3(2)$. It follows that $|C|=8$. Since $\delta\geq 5$, there are $7$ non-zero codewords, each having weight either $5,6,$ or $7$. Since $m=7$, the supports of two such codewords intersect in at least three entries. Hence the distance between two such codewords is at most $4$, a contradiction. Suppose $m=11$, $X_{\b 0}=Z_{11}.Z_5$ and $X=\PSL_2(11)$. Then $|C|=12$, so Proposition~\ref{equivpunchad} says that $C$ is equivalent to the even weight subcode of the punctured Hadamard $12$ code, however this contradicts Lemma~\ref{punchadamard}. Suppose $m=11$, $X_{\b 0}=Z_{11}.Z_5$ and $X=M_{11}$. Then $|C|=2^4.3^2$, which contradicts the Singleton bound $|C|\leq 2^{m-\delta+1}\leq 2^7$. If $m=11$, $X_{\b 0}=\PSL_2(11)$ and $X=M_{11}$, then $|C|=12$. By Proposition~\ref{equivpunchad}, $C$ is equivalent to the even weight subcode $\E$ of the punctured Hadamard $12$ code. Moreover, by Lemma~\ref{punchadamard}, $C$ is indeed $(X,2)$-neighbour transitive. 
\end{proof}

\begin{table}
\begin{center}
\begin{tabular}{ccccccc}
\hline\noalign{\smallskip}
$\delta=m$ & $5$ & $6$ & $7$ & $8$ & $9$ & $10$ \\
 \noalign{\smallskip}\hline\noalign{\smallskip}
 $X$ & -- & $\PGL_2(5)$ & $\AGL_1(7)$ & $\PGL_2(7)$ & $\leq\AGL_2(3)$ & $\leq\PGaL_2(9)$\\
 \noalign{\smallskip}\hline
\end{tabular}
\caption{Groups $X\neq S_m$ from Table~\ref{repetitiongroups}, for $m\leq 10$.}
\label{homgroups}
\end{center}
\end{table}

\begin{proof}[Proof of Theorem~\ref{comptrans}]
 Suppose $q=2$ and $C$ is equivalent to the binary repetition code. We show that $C$ is $X$-completely transitive with $X\cong S_m$.  It is clear that $L(\cong S_m) \leq\Aut(C)$ and that $H=\langle(h,\ldots,h)\rangle\leq\Aut(C)$, where $1\neq h\in S_2$. Now let $X$ be the group consisting of automorphisms of the form $x=(h,\ldots,h)\sigma$ if $\sigma$ is an odd permutation and $x=\sigma$ if $\sigma$ is an even permutation.  Then $X\cong S_m$, $X_{\b 0}\cong A_m$, $X\cap S_q^m=1$, and $X$ acts transitively on $C$.  The covering radius of $C$ is $\lfloor\frac{m}{2}\rfloor$ and $C_i$ consists of the vertices of weights $i$ and $m-i$, for $i=0,\ldots,\lfloor\frac{m}{2}\rfloor$.  Let $\nu_1,\nu_2\in C_i$.  If $\nu_1,\nu_2$ both have the same weight, then because $A_m$ acts $i$-homogeneously on $M$ for all $i\leq m$ it follows that there exists $\sigma\in X_{\b 0}$ such that $\nu_1^\sigma=\nu_2$.  Now suppose $\nu_1$ and $\nu_2$ have different weights, say $\nu_1$ has weight $i$ and $\nu_2$ has weight $m-i$.  Then there exists $x\in X$ such that $\nu_2^x$ has weight $i$.  Consequently there exists $\sigma\in X_\alpha$ such that $\nu_1^\sigma=\nu_2^x$, thus $\nu_1^{\sigma x^{-1}}=\nu_2$.  Hence $X$ acts transitively on $C_i$ and so $C$ is $X$-completely transitive.  
 
 Conversely, suppose $C$ is entry-faithful $X$-completely transitive, for some group $X$. Since $\delta\geq 5$, and so $\rho\geq 2$, $C$ is $(X,2)$-neighbour transitive. Thus, by Theorem~\ref{main}, either $C$ is the even weight subcode $\E$ of the Hadamard code of length 12, or $C$ is the binary repetition code. By Lemma~\ref{punchadamard}, $\E$ is not $X$-completely transitive for any group $X$. Thus, $q=2$ and $C$ is equivalent to the binary repetition code, and $X$ is listed in Table~\ref{repetitiongroups}. Note that in this case $\soc(X)=\soc(X_{\b 0})$. If $\soc(X)=A_m$ then $X=S_m$, $X_{\b 0}=A_m$ and $C$ is $X$-completely transitive by the above. 
 
 Suppose that $\soc(X)\neq A_m$. By Proposition \ref{ihom}, $X_{\b 0}$ (and so $X$ also) is $i$-homogeneous on $M$ for all $i\leq \lfloor\frac{\delta-1}{2}\rfloor=\lfloor\frac{m-1}{2}\rfloor$. By \cite[Section 7.7 and Theorem 9.4B]{dixon1996permutation}, we see that the groups in Table~\ref{repetitiongroups} are at most $4$-homogeneous, and so $m\leq 10$. Comparing the possible values for $m$ with those groups in Table~\ref{repetitiongroups} satisfying the conditions of the last column, we arrive at Table~\ref{homgroups}. By \cite[Section 7.7 and Theorem 9.4B]{dixon1996permutation}, we see that $\AGL_1(7)$, $\AGL_2(3)$ and $\PGaL_2(9)$ do not have $\lfloor\frac{\delta-1}{2}\rfloor$-homogeneous subgroups. Suppose $X=\PGL_2(7)$ and $m=8$. Then $X$ must be transitive on the set of weight $4$ vertices, of which there are $\binom{8}{4}=70$ such vertices. However, $5$ does not divide $|X|$. 
 
 Suppose $X\cong \PGL_2(5)$ and $m=6$. Then $X_{\b 0}\cong \PSL_2(5)$. Since $q=2$ we have $X_{\b 0}=X\cap L\leq L\cong S_m$. Let $h=((01),\ldots,(01))\in S_2^6$. As every $x\in X\setminus X_{\b 0}$ must map the zero codeword to the all ones codeword, it follows that for some $\sigma\in\PGL_2(5)\setminus \PSL_2(5)\subseteq X^M$ we have $x=h\sigma$. By \cite[Section 7.7 and Theorem 9.4B]{dixon1996permutation}, $\PGL_2(5)$ is $3$-transitive, but $\PSL_2(5)$ is not $3$-homogeneous. In fact, by \cite[Sec. 2.4 and Lem. 9.1.1]{neilphd}, $\PSL_2(5)$ has two orbits on the set of weight $3$ vertices, and these are the characteristic vectors of complementary $2$-$(6,3,2)$ designs. Denote these orbits as $\O_1,\O_2$, and note that for $\sigma\in\PGL_2(5)\setminus\PSL_2(5)\subseteq \Sym(M)$ we have $\O_1^\sigma=\O_2$. Since the designs are complementary, we have $\O_1^h=\O_2$. Thus, for $x=h\sigma\in X\setminus X_{\b 0}$, where $\sigma\in\PGL_2(5)\setminus\PSL_2(5)$, we have $\O_1^x=\O_1^{h\sigma}=\O_1$. Hence $X$ is not transitive on the set of weight $3$ vertices. 
\end{proof}

\end{document}